\documentclass[12pt,reqno]{amsart}
\usepackage{mathrsfs,amsmath,amssymb,latexsym,amsthm,amsfonts, fullpage}
\usepackage[usenames, dvipsnames]{color}

 \usepackage[bookmarksnumbered,plainpages,backref]{hyperref}
\usepackage[utf8]{inputenc}
\usepackage[normalem]{ulem}



\title{The Baouendi-Treves approximation theorem for Gevrey classes and applications}

\author {Gustavo Hoepfner}
\address{Departamento de Matem\'atica, Universidade Federal de S\~ao Carlos, S\~ao Carlos, SP, 13565-905, Brasil}
\email{hoepfner@dm.ufscar.br}

\author {Renan D. Medrado}
\address{ Instituto de Matemática, Universidade Federal de Alagoas, Maceió, AL,   57072-970, Brasil}
\email{renan.medrado@im.ufal.br}

\author {Luis F. Ragognette}
\address{Departamento de Matem\'atica, Universidade Federal de S\~ao Carlos, S\~ao Carlos, SP, 13565-905, Brasil}
\email{luisragognette@dm.ufscar.br}

\date{}

\thanks{The first author was partially supported by FAPESP (2017/03825-1 and 2017/06993-2) and CNPq (305746/2015-4). The third author was partially supported by FAPESP (2016/13620-5 and 2017/13450-5).}

\theoremstyle{definition} 

\newtheorem{Def}{Definition}[section]

\theoremstyle{definition}

\newtheorem{remark}[Def]{Remark}

\theoremstyle{plain}
\newtheorem{Pro}[Def]{Proposition}

\newtheorem{Theo}[Def]{Theorem}
\newtheorem{Lem}[Def]{Lemma}



\newcommand{\XX}{\mathrm{X}}
\newcommand{\MM}{\mathrm{M}}
\newcommand{\LL}{\mathrm{L}}

\newcommand{\RN}{\R^N}

\newcommand{\Rm}{\R^m}

\newcommand{\ZN}{\Z^N}

\newcommand{\supp}{\mathrm{supp}}

\newcommand{\R}{\mathbb{R}}

\newcommand{\Rn}{\R^{n}}
\newcommand{\C}{\mathbb{C}}

\newcommand{\Z}{\mathbb{Z}}

\newcommand{\Cinf}{C^{\infty}}

\newcommand{\lra}{\longrightarrow}

\newcommand{\del}{\partial}

\renewcommand{\Re}{\mathrm{Re}\,}

\newcommand{\dd}{\textnormal{d}}

\numberwithin{equation}{section}

\begin{document}

\begin{abstract}
In this work we show how to extend the seminal Baouendi-Treves approximation theorem 
for Gevrey functions and ultradistributions.  As  applications we present a Gevrey version of the approximate Poincaré Lemma and study ultradistributions vanishing on maximally real submanifolds. 
\end{abstract}

\maketitle

\setcounter{tocdepth}{1}

\tableofcontents

\section{Introduction}

The goal of this paper is to  extend the celebrated Baouendi-Treves approximation theorem  to Gevrey functions and ultradistributions. The classical Baouendi-Treves theorem has deep implications in the theory of CR geometry and in the theory of local solvability of locally integrable structures.

Let us denote by $\Omega$ an open subset of $\R^N$. A locally integrable structure is a  subbundle $\mathcal L$ of the complexified tangent bundle $\C T\Omega$  if given an arbitrary point $p_0\in\Omega$ there are an open neighborhood $U_0$ of $p_0$  and functions $Z_1,\ldots,Z_m \in C^\infty(U_0)$  such that the orthogonal of $\mathcal{L}$ is generated over $U_0$ by their differentials $\dd Z_1,\ldots,\dd Z_m$. We say that $u$ is a  solution of $\mathcal L$ if, for every (smooth) local section $\LL\in\mathcal L$, we have $\LL u=0.$
The Baouendi-Treves approximation theorem states that  any  $u$ in $C^k(\Omega)$, $k\in\{0,1,2,\dots,\infty\}$,  that is  solution of $\mathcal{L}$ can be approximated in  a small neighborhood of any given point of $\Omega$ in the $C^k$-topology by polynomials in $Z_1,\dots,Z_m$ and if $u\in \mathcal{D}'(\Omega)$ is a solution a similar result holds in the topology of $\mathcal{D}'$. Further generalizations for  Lebesgue spaces
$L^p$, $1\le p<\infty$; Sobolev spaces; Hölder spaces; and (localizable) Hardy spaces $h^p$, $0<p<\infty$ were given in \cite{HM98, bch_iis}.

Our main theorem extends to the class of Gevrey  functions and their dual spaces.

\begin{Theo}[Baouendi-Treves approximation formula] \label{BTAPPthm}
Let $\mathcal L$ be a $G^{s}-$locally integrable structure on $\Omega.$ Let us assume that there is $Z=(Z_1,\dots,Z_m): \Omega \lra \C^m$ of class $G^{s}$ such that  $\dd Z_1,\dots,\dd Z_m$ spans $ \mathcal L^\perp$ over $\Omega$. Then, for any $p\in \Omega$, there exist two open sets $U$ and $W$ with $\overline U\subset W\subset \Omega$ such that
\begin{enumerate}
\item any $u\in G^{s} (W)$ that is a solution of $\mathcal{L}$ in $W$ is the limit in  $ G^{s} (U)$ of a sequence of polynomial solutions $P_j(Z)$.
\item any $u\in\mathcal {D}'_s (W)$ that is a solution of $\mathcal{L}$ in $W$ is the limit in  $\mathcal{D}'_s (U)$ of a sequence of polynomial solutions $P_j(Z)$. 
\end{enumerate} 
\end{Theo}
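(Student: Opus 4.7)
The plan is to adapt the classical Baouendi-Treves scheme, carefully tracking Gevrey estimates at every step. Near a given $p\in\Omega$, by a Gevrey implicit function argument I would first pass to local coordinates $(x,t)\in\R^m\times\R^n$ centered at $p$ in which $Z_j(x,t)=x_j+i\phi_j(x,t)$ with $\phi_j\in G^s$, $\phi_j(0)=0$ and $d\phi_j(0)=0$. Shrinking further if necessary, I would pick open sets of product form $U,W$ with $\overline U\subset W\subset\Omega$ and a cutoff $g\in G^s_c(\R^m)$ equal to $1$ on the $x$-projection of $\overline U$ and supported in a small ball around the origin. The central object is the Gaussian approximation
\[
E_\tau u(x,t)=\left(\frac{\tau}{\pi}\right)^{m/2}\int_{\R^m} e^{-\tau[Z(x,t)-Z(x',t)]^2}\,g(x')\,u(x',t)\,\det Z_x(x',t)\,dx'.
\]
A standard deformation argument, using that $u$ is annihilated by the vector fields generating $\mathcal L$, will show that $E_\tau u(x,t)=F_\tau(Z(x,t))$ for an entire function $F_\tau:\C^m\to\C$, so that $E_\tau u$ is itself a solution of $\mathcal L$.

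The heart of the proof will be to show that $E_\tau u\to u$ in $G^s(U)$. Writing $G^s(U)$ as the inductive limit over $h>0$ of the Banach spaces $G^{s,h}(U)$ of functions obeying $|\partial^\alpha f|\le Ch^{|\alpha|}(\alpha!)^s$, it suffices to exhibit a single constant $h_0$ such that $u$ and all $E_\tau u$ lie in $G^{s,h_0}(\overline U)$ and $E_\tau u\to u$ in that Banach space. To bound $\partial^\alpha_{x,t}(E_\tau u-u)$ uniformly in $\tau$ I would combine (i) Gevrey estimates on derivatives in $(x,t)$ of the complex Gaussian kernel $e^{-\tau[Z(x,t)-Z(x',t)]^2}$, obtained via Fa\`a di Bruno from the Gevrey bounds on $\phi$, with (ii) a choice of $U$ so small that $|\Im[Z(x,t)-Z(x',t)]|\le\epsilon|x-x'|$ for some small $\epsilon$, which gives $|e^{-\tau[Z(x,t)-Z(x',t)]^2}|\le e^{-c\tau|x-x'|^2}$ and thus absorbs the polynomial-in-$\tau$ growth coming from differentiation. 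A Laplace-type argument will then upgrade the classical $C^0$-convergence $E_\tau u\to u$ to convergence in $G^{s,h_0}(U)$.

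For each fixed $\tau$, the entire function $F_\tau$ has Taylor polynomials $P_{\tau,N}$ at the origin converging uniformly with all derivatives on compact subsets of $\C^m$. Composing with $Z$ and invoking the Gevrey stability of substitution of a $G^s$ map into a convergent holomorphic series, one gets $P_{\tau,N}(Z)\to F_\tau(Z)=E_\tau u$ in $G^s(U)$ as $N\to\infty$; a diagonal extraction $(\tau_j,N_j)\to(\infty,\infty)$ will then yield polynomial solutions $P_j(Z)$ converging to $u$ in $G^s(U)$, giving~(1). For~(2) I would dualize: setting $\langle E_\tau u,\psi\rangle:=\langle u,R_\tau\psi\rangle$ for $\psi\in G^s_c(U)$, where $R_\tau$ is the transpose operator of the same Gaussian-kernel structure, the Gevrey convergence $R_\tau\psi\to\psi$ in $G^s_c(U)$ established in the primal case will transfer by duality into $E_\tau u\to u$ in $\mathcal D'_s(U)$, and the polynomial truncation carries over identically.

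I expect the sharpest difficulty to lie in the Gevrey estimate of step~(ii): each derivative in $t$ of the Gaussian kernel produces an extra factor of $\tau$, which at first glance overwhelms the $(\alpha!)^s$ growth permitted by a Gevrey norm. Controlling these factors will require exploiting both the solution hypothesis (so that $t$-derivatives falling on $u$ can be reorganized, via the structural vector fields, into $x$-derivatives of the kernel) together with the quadratic Gaussian decay in $|x-x'|$, so as to trap each spurious $\tau$ inside the exponential weight. Calibrating the size of $U$ so that these two mechanisms combine to yield uniform-in-$\tau$ Gevrey estimates, rather than a mere uniform $C^\infty$ bound, will be the delicate quantitative step on which the entire argument hinges.
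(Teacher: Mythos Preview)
Your proposal conflates two distinct Gaussian operators, and this is a genuine gap. The operator you write down,
\[
E_\tau u(x,t)=\Big(\tfrac{\tau}{\pi}\Big)^{m/2}\int e^{-\tau[Z(x,t)-Z(x',t)]^2}g(x')u(x',t)\det Z_x(x',t)\,dx',
\]
integrates over the \emph{same} slice $t'=t$; it is \emph{not} an entire function of $Z(x,t)$, and no deformation argument makes it one. The integrand depends on $t$ both through $Z(x,t)$ and through $Z(x',t),u(x',t),\det Z_x(x',t)$. What the deformation (Stokes) argument actually gives is that your operator differs from the genuinely entire approximation
\[
\widetilde E_\tau u(x,t)=\Big(\tfrac{\tau}{\pi}\Big)^{m/2}\int e^{-\tau[Z(x,t)-Z(x',0)]^2}g(x')u(x',0)\det Z_x(x',0)\,dx'
\]
by an error term supported where $\nabla g\ne 0$, i.e.\ on an annulus bounded away from $x$. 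The paper splits the analysis accordingly: one shows (i) your $E_\tau u\to gu$ in $G^s$ and (ii) the error term $\to 0$ in $G^s$; only $\widetilde E_\tau u$ is entire in $Z$ and hence approximable by polynomials in $Z$.

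The second gap is in your handling of the powers of $\tau$. Your plan is to differentiate the kernel directly and absorb the resulting $\tau$'s into the Gaussian decay $e^{-c\tau|x-x'|^2}$. This works for the error term in (ii), because there the integration is over $|x'-x|\ge c>0$ and one gets honest $e^{-c\tau}$ decay that kills all polynomial growth in $\tau$. But for (i) the Gaussian is concentrated near the diagonal $x'=x$, there is no $e^{-c\tau}$ to spare, and direct differentiation of the kernel produces $\tau^{|\alpha|}$ with nothing to cancel it. The paper's device is different: one works with the structural vector fields $\MM_k,\LL_j$ dual to $dZ_k,dt_j$ and observes the exact commutation
\[
\XX^\alpha E_\tau^{\,g}[u]=\sum_{\alpha'+\alpha''=\alpha}\binom{\alpha}{\alpha'}E_\tau^{\,\XX^{\alpha'}g}[\XX^{\alpha''}u],
\]
so that derivatives fall on $g$ and $u$ rather than on the kernel, and no powers of $\tau$ ever appear. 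Combined with the isomorphism $G^s(\overline V)\cong G^s(\overline V;\MM,\LL)$, this yields the uniform-in-$\tau$ Gevrey convergence immediately. Your outline does not contain this mechanism, and the alternative you sketch (``trap each spurious $\tau$ inside the exponential weight'') cannot succeed for the diagonal part.

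Finally, for part~(2) your dualization idea works verbatim for the operator integrating at level $t'=t$ (this is exactly how the paper defines it for $u\in\mathcal D'_s$), but the entire approximation $\widetilde E_\tau u$ involves $u(\cdot,0)$, a trace. For ultradistributions this trace must be constructed from the wavefront condition $WF_s(u)\cap N^*\{t=\mathrm{const}\}=\emptyset$ (which holds for solutions), and the Stokes identity relating the two operators has to be rederived at the ultradistribution level. This is a nontrivial additional ingredient that your proposal does not address.
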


The Baoeundi-Treves approximation formula was already proved in the special case when the Gevrey locally integrable structure has corank zero, i.e., every point has a neighborhood $U$ where we have defined $Z_1, \ldots, Z_N$ Gevrey functions whose differential generate $\C T^{\ast} \Omega$ (see \cite{c00} and \cite{lfr2019}).

Our first application is a Gevrey version of a result called approximate Poincaré Lemma (see \cite{treves_has}). It is a useful lemma in the theory of local solvability of locally integrable structures that essentially says that a form that is $\mathbb{L}$-closed is the limit of $\mathbb{L}$-exact forms, here $\mathbb{L}$ is a differential operator induced by the de Rham operator.

Another application says that an ultradistribution solution of $\mathcal{L}$ that vanishes in a submanifold maximally real  with respect to $\mathcal{L}$ must be zero in a neighborhood of the submanifold.

The proof of Theorem~\ref{BTAPPthm} will be divided in two steps: first for ultradifferentiable  functions (classical solutions) and  second for ultradistributions (weak solutions).

The novelty here is, in one hand, to provide a finer  way to write the commutator formula first given by \cite{BT81,bch_iis}, see \eqref{XalphaGtau}, which allow us to obtain optimal  control on the constants appearing in the process of differentiating indefinitely the approximation operators when the solution is classic. On the other hand, when the solution is only an ultradistribution, we need to justify the approximation operator by proving that the traces of solutions of $\mathcal L$ are well defined (by the same formula given in \cite[Section 8]{Hor90a} in the case of a distribution)  and then take the full advantage of this formula \eqref{TRACE} to obtain the approximation scheme in these case. 

We point out that the original arguments for weak solutions cannot be applied in our situation since ultradistributions cannot be represented by a finite order differential operator. However our argument can be used to recover the original result without making use of either: the representation of distributions by means of a finite order partial differential operators, or Sobolev embedding theorems. Thus we strongly believe that our arguments can be used to simplify the original arguments.

The paper is organized as follows: in Section \ref{S2} we recall some definitions and basic results of the Gevrey functions and introduce the locally integrable structures.  The proof of Theorem \ref{BTAPPthm} is presented in Section \ref{DBTG}, first for Gevrey functions, Subsection~\ref{ap}, then for Gevrey ultradistributions, Subsection~\ref{ap2}. We present two main applications: the first  is given  in Section~\ref{AAPL} where we use Theorem \ref{BTAPPthm} to prove the approximate Poincaré Lemma in the Gevrey topology; and the second is treated in Section~\ref{AUVOMRS} where  we study ultradistributions vanishing on maximally real submanifolds.
The approximation theorem for more general classes of ultradifferentiable functions and ultradistributions is discussed in Section~\ref{BTDC}.
Finally, we conclude with two sections in the Appendix regarding some technicalities needed troughout the paper.

\section{Definitions and Preliminar Results}\label{S2}

Let $\Omega$ be an open subset of $\RN$ and fix $s\geq 1$. A Gevrey function of order $s$ in $\Omega$ is a smooth function $f\in \Cinf(\Omega)$ such that for every $K$ compact subset of $\Omega$ there is a $h>0$ such that
\begin{align}
  \|f\|_{h, K}\doteq \sup_{\alpha\in Z_+^{N}} \Big(\frac{1}{h^{|\alpha|}\alpha!^s} \sup_{x\in K} |\del^{\alpha} f(x)|\Big)<\infty.
\end{align}
We will denote the space of Gevrey functions of order $s$ in $\Omega$ by $G^s(\Omega)$. We recall that $G^{1}(\Omega)$ is the space of real-analytic functions in $\Omega$. In this work we will always assume that $s>1$ and we shall denote by $G^{s}_c(\Omega)$ the space of Gevrey functions of order $s$ with compact support.

If $V\subset \subset \Omega$ and $h>0$, we shall denote by $G^{s,h}(\overline{V})$ the space of all smooth functions $f$ in $\overline{V}$ for which
$  \|f\|_{h, \overline{V}}<\infty.$ Moreover, we denote by $G^{s}(\overline{V})$ the space of all smooth functions $f$ in $\overline{V}$ for which there is a $h>0$ such that $f\in G^{s,h}(\overline{V})$ and we denote by $G^{s,h}_c(\overline{V})$ the space of all $f\in \Cinf(\RN)$ with support in $\overline{V}$ such that $f|_{\overline{V}}\in  G^{s,h}(\overline{V})$.

 The topological dual of $G^{s}_c(\Omega)$ will be called the space of ultradistributions of order $s$ and  will be denote by $\mathcal{D}'_s(\Omega)$. The continuity of $u\in \mathcal{D}'_s(\Omega)$ can be expressed in the following way: for every $V\subset \subset \Omega$  and every $h>0$ there is $C_h>0$ such that
\begin{align*}
  |u(\varphi)|\leq C_h \|\varphi\|_{h, \overline{V}}, 
\end{align*}
for every $\varphi\in G^{s,h}_c(\overline{V})$. We will denote by $\mathcal{E}'_s(\Omega)$ the space of ultradistributions with compact support in $\Omega$.

Let us assume that $0$ belongs to $\Omega$, $N=m+n$ and consider a $G^{s}$-locally integrable structure of rank $n$ in $\Omega$, i.e.,  a subbundle $\mathcal L$ of $\mathbb C T\Omega$ of rank $n$ over $\Omega$ which the orthogonal, $\mathcal{L}^{\perp},$  is locally generated by the differentials of $m$ Gevrey functions of order $s$ in $\Omega$.

According to \cite{bch_iis,treves_has}\footnote{It is easy to check that their proofs also work in the $G^s$-category.} we can assume, shrinking $\Omega$ around $0$ if necessary, the existence of a local system of $G^s$ coordinates $(x,t)=(x_1,\dots,x_m,t_1,\dots,t_n)$ in $\Omega$
 as well as a  map $\phi:\Omega\to\mathbb R^m$, $\phi=(\phi_1,\dots,\phi_m)$ of class $G^{s}$ 
satisfying 
\begin{equation}\label{eq z2}
\phi_k(0,0) =0, \  \dd_x \phi_k(0,0)=0,\quad k=1,\quad \ldots, m  
\end{equation}
such that
\begin{equation}\label{eq z1}
Z_k(x,t) = x_k +i\phi_k(x,t), \quad k=1,\dots,m.
\end{equation}

Denote by $B^{\R^p}_R(0)= \{ x \in \R^p: |x|<R\}$ and define $V:=B_R^{\R^m}(0)\times B_R^{\R^n}(0)$. Since $\dd_x \phi_1(0)=\cdots \dd_x \phi_m(0)=0$, we can choose a positive number $R$   such that $V\subset\subset \Omega$ and
\begin{equation}\label{eq z4}
|\phi_k(x,t)-\phi_k(y,t)| \le \tfrac12 |x-y|, \quad \forall (x,t), (y,t)\in \overline{V}.
\end{equation}

Fix  an open neighborhood $W\subset \subset \Omega$ of $\overline{V}.$ Modifying the imaginary part of $Z$ outside of $W$ using cutoff functions of class $G^s$ we can obtain a locally integrable structure defined globally in $\RN$ that agrees with $\mathcal L$ in $W$. Abusing of notation we will still denote this new structure by $\mathcal{L}$ and assume that \eqref{eq z4} holds globally in $\RN$. Note that the conclusions that we will obtain for this new structure $\mathcal L$ will also be true for the old structure in $V$.

Since $\dd Z_1, \ldots, \dd Z_m, \dd t_1, \ldots, \dd t_n$ is a global frame for $\C T^{\ast} \RN$ we can consider its dual frame in $\C T \RN$, i.e., consider $N$ vector fields:
\begin{align}\label{LseMs}
  \MM_1, \ldots, \MM_m, \LL_1, \ldots, \LL_n
\end{align}
with the property that
\begin{align*}
  \dd Z_{k}( \MM_{k'})= \delta_{k k'}, \quad  \dd Z_{k}( L_{j})= 0, \quad\quad k, k'\in \{1, \ldots, m\}, \quad j \in \{1, \ldots, n\},\\
  \dd t_{j}( \MM_{k})= 0, \quad  \quad \dd t_{j}( L_{j'})= \delta_{j j'}, \quad \quad k \in\{ 1, \ldots, m\},\quad j,j' \in \{1, \ldots, n\}.
\end{align*}

Finally, note that the differential of any $C^1$ function $w(x,t)$ can be expressed in the basis $\{\dd Z_1, \dots , \dd Z_m, \dd t_1,\dots, \dd t_n\}$ of $\mathbb C T^*\mathbb R^N$ as
\begin{equation}
\dd w = \sum_{j=1}^n \LL_j w\, \dd t_j   + \sum_{k=1}^m\MM_k w\,\dd Z_k.
\end{equation}

Let $\XX_1, \ldots, \XX_N$ be a family of $N$ pairwising commuting smooth vector fields  that form a global frame to $\C T \Omega$. We can define the space of Gevrey functions regarding $\XX_1, \ldots, \XX_N$  as the space of all $f\in \Cinf(\Omega)$ such that for every $K$ compact subset of $\Omega$ there is a $h>0$ such that
\begin{align}
  \sup_{\alpha\in Z_+^{N}} \Big(\frac{1}{h^{|\alpha|}\alpha!^s} \sup_{(x,t)\in K} |\XX^{\alpha} f(x,t)|\Big)<\infty.
\end{align}
We shall denote this space by $G^{s}(\Omega; \XX)$. A sequence of functions $f_\nu\in G^{s}(\Omega; \XX)$ converges to $f\in G^s(\Omega; \XX)$ if for every $K\subset \Omega$ compact there is $h>0$ such that for every $\epsilon>0$ there is $\nu_0$ such that
\begin{align*}
  \sup_{\alpha\in Z_+^{N}} \Big(\frac{1}{h^{|\alpha|}\alpha!^s} \sup_{(x,t)\in K} |\XX^{\alpha}f_{\nu}(x,t)- \XX^{\alpha}f(x,t)|\Big)< \epsilon,
\end{align*}
for every $\nu>\nu_0$.

Analogously, we denote by $G^s(\Omega; \LL, \MM)$ the space of Gevrey functions with respect to the vector fields  considered in \eqref{LseMs}, associated with a locally integrable structure. Since $\mathcal{L}$ is a $G^{s}$-locally integrable structure, it was proved in \cite{lfr2019} that
\begin{equation}\label{isomorphic}
G^s(\Omega; \LL, \MM)\  \text{ is isomorphic to } \ G^s(\Omega) \ \text{ as topological spaces} 
\end{equation}
and the same holds for compact sets. These spaces will play an important role in this work since part of the proof will be to show that a sequence of functions converges in $G^{s}(\overline{V}; \MM, \LL)$ (and consequently in $G^{s}(\overline{V})$) for a relatively compact open subset $V$ of $\Omega$.
 We will also use the following notation: for every $k$ positive integer we denote
 \begin{align*}
   \|f\|_{C^{k}(\overline{V})}= \sum_{|\alpha|\leq k} \sup_{x\in \overline{V}}|\del^{\alpha}f(x)|
 \end{align*}
 where $f \in C^{k}(\overline{V})$.



\section{The ultradifferentiable Baouendi-Treves approximation formula \label{DBTG}}

In this section we will present the Baouendi-Treves approximation formula for ultradifferentiable functions and ultradistributions that are solutions of a locally integrable structure of arbitrary rank. It is easy to see that the theorem follows if we prove that the solutions are limit in the appropriate topology of entire functions in $Z$.

%





\subsection{Proof of Baouendi-Treves  approximation theorem  in $G^{s}$}\label{ap}

Let  $u\in G^s(\Omega)$ be a solution of $\mathcal L$ in $W$. For each $\chi\in G_c^{s}(B_{R}^{\R^m}(0))$  and, 
for each $\tau>0$, define the function $E_\tau^{\chi}[ u]$ by
\begin{equation}
E_\tau^{\chi}[ u](x,t):=\Big(\frac\tau{\pi}\Big)^{\tfrac m2}\!\! \int_{\R^m}  e^{-\tau \langle Z(x,t)-Z(y,0) \rangle^2} \chi(y)u(y,0) \,{\rm det} Z_x(y,0)\,\dd y, \quad (x,t)\in\R^N.
\end{equation}
For each $\tau>0$, $E_\tau^{\chi}[u]$ is an entire function of $Z(x,t).$ Thus $E_\tau^{\chi}[ u]\in G^s(\RN)$ and  is a solution of $\mathcal L$.
Consider also the functions  defined by
\begin{equation}\label{eq Gtau}
G_\tau^{\chi}[ u](x,t): = \Big(\frac\tau{\pi}\Big)^{\tfrac m2}\!\! \int_{\R^m}  e^{-\tau \langle Z(x,t)-Z(y,t)\rangle^2} \chi(y)u(y,t) \, {\rm det} Z_x(y,t)\,\dd y, 
\end{equation}
and,
\begin{equation}\label{eq Rtau}
R_\tau^{\chi}[ u](x,t):=G_\tau^{\chi}[ u](x,t) - E_\tau^{\chi}[ u](x,t).
\end{equation}

 We note that $G_\tau^{\chi}[u]$  converges to $\chi u$ even when $u$ is not a solution of $\mathcal{L}$. 
 
\begin{Pro}\label{GtauFuncao}
Let $\chi\in G^{s}_c(B^{\R^m}_{R}(0))$ and $u \in G^{s}(\overline{V})$. Then $G^{\chi}_\tau[u]$ converges to $\chi u$ in $G^{s}(\overline{V})$ when $\tau \lra \infty$. 
\end{Pro}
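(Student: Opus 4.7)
The plan is to reduce the problem to convergence in the vector-field Gevrey norm via the isomorphism \eqref{isomorphic}, obtain a Leibniz-type commutator formula for $\MM^{\alpha}\LL^{\beta}G^{\chi}_\tau[u]$ with constants independent of $\tau$, and then combine this uniform Gevrey bound with pointwise convergence of each fixed derivative.

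By \eqref{isomorphic}, it suffices to show $G^{\chi}_\tau[u]\to \chi u$ in $G^{s}(\overline{V};\MM,\LL)$. The crucial kernel identity is
\[
\bigl(\MM_{k}^{(x,t)}+\MM_{k}^{(y,t)}\bigr)\,e^{-\tau\langle Z(x,t)-Z(y,t)\rangle^{2}}=0,
\]
which holds because the exponential depends only on $Z(x,t)-Z(y,t)$ and because $\MM_{k}$ acts as $\partial/\partial Z_{k}$ on functions of $Z$. Combined with integration by parts in $y$ (legal since $\chi$ is supported in $B^{\R^{m}}_{R}(0)$) this transfers any $\MM_{k}$ that lands on the exponential onto $\chi(y)u(y,t)\det Z_{x}(y,t)$, at the cost of a $G^{s}$ divergence factor coming from the formal adjoint of $\MM_{k}^{(y,t)}$. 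For $\LL_{j}$ the situation is analogous but asymmetric: $\LL_{j}^{(x,t)}$ applied to the exponential produces factors $\partial_{t_{j}}\phi_{k}(y,t)$, which are absorbed into the integrand as $G^{s}$ amplitudes, while $\LL_{j}^{(x,t)}$ differentiating $u(y,t)\det Z_{x}(y,t)$ contributes a term of type $G^{\chi}_\tau[\LL_{j}u]$ modulo further $G^{s}$ corrections.

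Iterating and grouping by the Leibniz rule produces the sharp commutator formula advertised in the introduction (see \eqref{XalphaGtau}),
\[
\MM^{\alpha}\LL^{\beta}G^{\chi}_\tau[u]=\sum_{\gamma\le\alpha,\;\delta\le\beta}\;\sum_{\psi\in\mathcal{F}_{\gamma,\delta}}G^{\psi}_{\tau}\!\bigl[\MM^{\alpha-\gamma}\LL^{\beta-\delta}u\bigr],
\]
where $\mathcal{F}_{\gamma,\delta}$ is a finite family of $G^{s}$ cutoffs built from $\chi$, $\det Z_{x}$, and the coefficients of $\MM_{k},\LL_{j}$, all supported in $B^{\R^{m}}_{R}(0)$. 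The elementary bound $|G^{\psi}_{\tau}[v](x,t)|\le C\|\psi\|_{\infty}\|v\|_{L^{\infty}(\overline{V})}$ follows from $\Re\langle Z(x,t)-Z(y,t)\rangle^{2}\ge \tfrac{3}{4}|x-y|^{2}$, which is exactly \eqref{eq z4}, together with the boundedness of $\det Z_{x}$. Feeding the Gevrey estimates for $u$, $\chi$, $Z$, and the coefficients of $\MM_{k},\LL_{j}$ into this bound, and organizing the multinomial coefficients, yields a uniform Gevrey bound
\[
\sup_{(x,t)\in\overline{V}}\bigl|\MM^{\alpha}\LL^{\beta}G^{\chi}_\tau[u](x,t)\bigr|\le C_{0}\,h_{0}^{|\alpha|+|\beta|}(\alpha!\,\beta!)^{s},\qquad \tau\ge 1,
\]
with $h_{0}$ depending only on $u,\chi,V$.

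For each fixed $(\alpha,\beta)$ the same commutator formula plus dominated convergence (after the substitution $y=x+w/\sqrt{\tau}$, under which the rescaled exponent converges to $\langle Z_{x}(x,t)w\rangle^{2}$ and the Gaussian on the image contour integrates to $1$) gives $\MM^{\alpha}\LL^{\beta}G^{\chi}_\tau[u]\to \MM^{\alpha}\LL^{\beta}(\chi u)$ uniformly on $\overline{V}$ as $\tau\to\infty$. A standard splitting argument closes the proof: given $\epsilon>0$ and $h>2h_{0}$, for $|\alpha|+|\beta|$ larger than some $N_{\epsilon}$ the uniform Gevrey bound forces each term in the seminorm $\|G^{\chi}_\tau[u]-\chi u\|_{h,\overline{V}}$ to be below $\epsilon$ for all $\tau$, while for the finitely many $(\alpha,\beta)$ with $|\alpha|+|\beta|\le N_{\epsilon}$ the pointwise convergence delivers the same bound for $\tau$ large. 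The main obstacle, and the novelty flagged in the introduction, is precisely the commutator formula: the Leibniz expansion proliferates terms and the naive $C^{\infty}$ or $L^{p}$ bookkeeping is too lossy to survive the factor $(\alpha!\,\beta!)^{s}$ in the denominator of the Gevrey seminorm, so the combinatorial organization must be sharp enough to produce constants that grow with $(\alpha,\beta)$ at exactly Gevrey rate and uniformly in $\tau$.
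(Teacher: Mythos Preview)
Your overall architecture is sound and your endgame is a legitimate alternative to the paper's, but the commutator step as you describe it is not what the paper actually proves, and the difference matters.

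The paper's single-step identities are exactly $\MM_{k}G^{\chi}_{\tau}[u]=G^{\chi}_{\tau}[\MM_{k}u]+G^{\MM_{k}\chi}_{\tau}[u]$ and $\LL_{j}G^{\chi}_{\tau}[u]=G^{\chi}_{\tau}[\LL_{j}u]+G^{\LL_{j}\chi}_{\tau}[u]$, with \emph{no} divergence factors from a formal adjoint and \emph{no} auxiliary amplitudes built from $\det Z_{x}$ or the coefficients of the vector fields. The reason is that $\det Z_{x}(y,t)\,\dd y$ is the pullback of $\dd Z_{1}\wedge\cdots\wedge\dd Z_{m}$ to the slice $\{t=\text{const}\}$, and $\MM_{k}$ acts as $\partial/\partial Z_{k}$; integration by parts against this exact volume form therefore costs nothing. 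Iterating gives $\XX^{\alpha}G^{\chi}_{\tau}[u]=\sum_{\alpha'+\alpha''=\alpha}\binom{\alpha}{\alpha'}G^{\XX^{\alpha'}\chi}_{\tau}[\XX^{\alpha''}u]$, so each of your families $\mathcal{F}_{\gamma,\delta}$ is a singleton. This is precisely the ``optimal control on the constants'' flagged in the introduction; your version, with its unspecified families built from $\det Z_{x}$ and the coefficients, leaves open whether $\sum_{\psi\in\mathcal{F}_{\gamma,\delta}}\|\psi\|_{\infty}$ grows at Gevrey rate, which is exactly the bookkeeping you say must be sharp.

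On the endgame the two proofs genuinely differ. The paper does \emph{not} argue by uniform Gevrey boundedness plus pointwise convergence plus tail splitting. Instead it performs the change of variables $y\mapsto x+\tau^{-1/2}y$ first, splits $G^{\psi}_{\tau}[v]-\psi v=I^{\psi}_{\tau}[v]-J^{\psi}_{\tau}[v]$, and proves the quantitative estimate $|I^{\psi}_{\tau}[v]|+|J^{\psi}_{\tau}[v]|\lesssim(\tau^{-1/2}\tilde C_{A}+e^{-A^{2}/2}\hat C)\,\|\psi\|_{C^{1}}\|v\|_{C^{1}}$ valid for \emph{arbitrary} $\psi,v$. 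Feeding $\psi=\XX^{\alpha'}\chi$, $v=\XX^{\alpha''}u$ into the clean commutator formula then yields a rate of convergence in the Gevrey seminorm that is uniform in $\alpha$. Your tail-splitting argument is correct and arguably softer, but it sacrifices this quantitative rate; the paper's extra precision is what immediately upgrades the result to convergence of $G_{\tau}$ to the product in the bilinear operator space $\mathcal{B}\bigl(G^{s}_{c}\times G^{s},G^{s}\bigr)$ stated right after the proposition.
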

\begin{proof}
It is enough to prove (see \eqref{isomorphic}) that
\begin{equation}\label{toshow}
G_\tau^{\chi}[ u](x,t) \to \chi u \quad \text{in }\quad  G^{s}(\overline{V}; \MM,\LL).
\end{equation}
Note that we may write 
\begin{equation}\label{eq:spliting Gtau}
G_\tau^{\chi}[ u](x,t) - \chi(x)u(x,t) = I_{\tau}^{\chi}[ u](x,t) - J_{\tau}^{\chi}[ u](x,t)
\end{equation}
where $I_{\tau}^{\chi}[ u]$ and $J_{\tau}^{\chi}[ u]$ can be written, after the change of variables $y \mapsto x+ \tau^{-1/2} y$ in \eqref{eq Gtau}, as
\begin{align*}
  I_{\tau}^{\chi}[ u](x,t)&= \pi^{-\frac{ m}{2}}\!\! \int_{\R^m}  e^{-\langle Z_x(x,t)y \rangle^2}\big(v(x+ \tau^{-1/2}y, t)-  v(y,t)\big)\dd y,\\
  J_\tau^{\chi}[u](x,t)&= \pi^{-\frac{ m}{2}}\!\! \int_{\R^m}  \Big(e^{-\tau \langle Z(x,t)-Z(x+ \tau^{-1/2}y,t)\rangle^2}-e^{-\langle Z_x(x,t)y \rangle^{2}}\Big) v(x+ \tau^{-1/2}y,t)\,\dd y,
\end{align*}
and the function $v$ is defined by
\begin{align*}
  v(y,t)= \left\{
  \begin{array}{ll}
    \chi(y)u(y,t) \ {\rm det} Z_x(y,t),  \quad &(y,t)\in B^{\R^m}_R(0)\times B_R^{\R^n}(0),
    \\[4pt]
    0,  &  (y,t) \in(\R^{m}\setminus B^{\R^m}_R(0))\times B_R^{\R^n}(0).
  \end{array}\right.
\end{align*}
We have
\begin{align*}
  \big|v(x+ \tau^{-1/2}y, t)-  v(x,t)\big|&\leq \tau^{-\frac{1}{2}} \|\nabla v\|_{C(\overline{V})}\\[4pt]
  &\leq \tau^{-\frac{1}{2}}\|\chi\|_{C^{1}(B_{R}(0))} \|u\|_{C^{1}(\overline{V})} \|\det Z_x\|_{C^{1}(\overline{V})},
\end{align*}
therefore,
\begin{align}\label{def:Itau}
  |I_{\tau}^{\chi}[ u](x,t)|&\leq   \pi^{-\frac{ m}{2}}\!\! \int_{\R^m}  e^{- |y|^{2}+ |\phi_x(x,t) y|^{2}}\big|v(x+ \tau^{-1/2}y, t)-  v(x,t)\big|\dd y
\notag  \\[4pt]
  &\leq \tau^{-\frac{1}{2}}\frac{B}{\pi^{\frac{ m}{2}}}  \|\chi\|_{C^{1}(B_{R}(0))} \|u\|_{C^{1}(\overline{V})}   \int_{\R^m}  e^{- \frac{3}{4}|y|^{2}}\dd y,
\end{align}
where $B:=\|\det Z_x\|_{C^{1}(\overline{V})}$.

To estimate $J^{\chi}_\tau[u](x,t)$, we use the fact that $|e^{-\tau\langle Z(x,t)-Z(x+ \tau^{-1/2}y,t)\rangle^2}|\leq e^{-3|y|^{2}/4}$ and $|e^{-\langle Z_x(x,t)y \rangle^{2}}|\leq e^{-3|y|^{2}/4}$, to obtain
\begin{align}\label{Jtau2}
  |J_{\tau}^{\chi}[ u](x,t)|&\leq    \pi^{-\frac{ m}{2}} \|v\|_{C(\overline{V})} \int_{\R^m}  \Big|e^{-\tau\langle Z(x,t)-Z(x+ \tau^{-1/2}y,t) \rangle^2}-e^{-\langle Z_x(x,t)y \rangle^{2}}\Big|\,\dd y\notag\\[4pt]
                            &\leq    \pi^{-\frac{ m}{2}} \|v\|_{C(\overline{V})} \int_{|y|< A}  \Big|e^{-\tau \langle Z(x,t)-Z(x+ \tau^{-1/2}y,t)\rangle^2}-e^{-[Z_x(x,t)y]^{2}}\Big|\,\dd y\notag\\[4pt]
                            &\hskip.5cm+    \pi^{-\frac{ m}{2}} \|v\|_{C(\overline{V})} e^{-A^{2}/2} \int_{|y|\geq A}  2e^{- |y|^{2}/4}\,\dd y,
\end{align}
for every $A>0$.
To estimate the first integral on the rightmost hand-side of \eqref{Jtau2}, we fix $y$ and $t$ and note that $\zeta_1=[ Z(x,t)-Z(x+ \tau^{-1/2}y,t)]/\tau^{-1/2}$ converges to $\zeta_2=-Z_x(x,t)y$ uniformly in $x\in \R^m$ as $\tau$ goes to $\infty$ and so there is $C>0$ such that $|[\zeta_1]^{2}-[\zeta_2]^{2}|\leq C\tau^{-1/2}$. This implies that $\Re [\zeta_1]^{2}\geq 0$ and $\Re [\zeta_2]^{2}\geq 0$ and using that $e^{-\zeta}$ is a Lipschitz function on $\Re \zeta\geq 0$ we conclude that
\begin{align}\label{def:Jtau}
  |J_{\tau}^{\chi}[ u](x,t)|&\leq    \frac{B}{\pi^{\frac{ m}{2}}} \|\chi\|_{C(B_{R}(0))} \|u\|_{C(\overline{V})} \bigg( C A^{m} \tau^{-1/2}
                            +   e^{-A^{2}/2} \int_{|y|\geq A}  2e^{- |y|^{2}/4}\,\dd y \bigg).
\end{align}
Using  \eqref{eq:spliting Gtau}, we may rewrite Lemma II.1.4 and Lemma II.1.6 in  \cite{bch_iis} as
\begin{equation}\label{LeibnizM}
  \MM_kG^{\chi}_\tau[u]= G_\tau^{\chi} [\MM_ku]+ G_\tau^{\MM_k\chi}[u], \quad \forall k=1,\ldots, m
\end{equation}
and
\begin{equation}\label{LeibnizL}
  \LL_jG^{\chi}_\tau[u]= G_\tau^{\chi} [\LL_ju]+ G_\tau^{\LL_j\chi}[u], \quad \forall j=1, \ldots, n.
\end{equation}
In order to simplify the notation define $\XX_j= \LL_j$ for $j=1, \ldots, n$ and $\XX_{n+k}= \MM_k$, for $k=1,\dots,m$. Since $\XX_1, \ldots, \XX_{n+m}$ are pairwise commuting, we have  that
\begin{equation}
  \XX^{\alpha} G^{\chi}_\tau[u]= \sum_{\alpha'+\alpha''= \alpha}{ \alpha\choose \alpha'} G_\tau^{\XX^{\alpha'}\chi} [\XX^{\alpha''}u], \quad \forall \alpha \in \ZN_+.
\end{equation}
Thus it follows that
\begin{align}\label{XalphaGtau}
  \XX^{\alpha} G^{\chi}_\tau[u]- \XX^{\alpha}(\chi u)= \sum_{ \alpha'+\alpha''= \alpha } {\alpha \choose \alpha'} \big( G_\tau^{\XX^{\alpha'}\chi} [\XX^{\alpha''}u]-(\XX^{\alpha'}\chi) [\XX^{\alpha''}u]\big).
\end{align}
To estimate $\XX^{\alpha} G^{\chi}_\tau[u]- \XX^{\alpha}(\chi u)$ on $\overline{V}$ we will make use of \eqref{eq:spliting Gtau}, \eqref{def:Itau} and \eqref{def:Jtau}, together \eqref{XalphaGtau} to obtain
\begin{align}\label{des:inter0}
  |\XX^{\alpha} G^{\chi}_\tau[u](x,t)- \XX^{\alpha}(\chi u)(x,t)|&\leq  \sum_{ \alpha'+\alpha''= \alpha } {\alpha \choose \alpha'} \Big(|I_{\tau}^{\XX^{\alpha'}\chi}[\XX^{\alpha''} u](x,t)|+|J_{\tau}^{\XX^{\alpha'}\chi}[\XX^{\alpha''} u](x,t)| \Big)
\notag\\[4pt]
&\leq \tau^{-1/2}\frac{B \tilde{C}_A }{\pi^{\frac{ m}{2}}}  \sum_{\alpha'+\alpha''= \alpha} {\alpha \choose \alpha' } \|\XX^{\alpha'}\chi\|_{C^{1}(B_{R}(0))} \|\XX^{\alpha''}u\|_{C^{1}(\overline{V})}  
\notag\\[4pt]
&\hskip.4cm +e^{-A^{2}/2}\frac{B \hat{C}}{\pi^{\frac{ m}{2}}} \sum_{ \alpha'+\alpha''= \alpha }\!\!{ \alpha \choose \alpha'}\|\XX^{\alpha'}\chi\|_{C(B_{R}(0))} \|\XX^{\alpha''}u\|_{C(\overline{V})}   
\end{align}
where 
\[
\tilde{C}_A:=  \int  e^{- \frac{3}{4}|y|^{2}}\dd y+CA^{m}  \quad\text{and}\quad \hat{C}:= \int  2e^{- |y|^{2}/4}\,\dd y.
\]

Now we assume that $\chi \in G^{s}_c(B_{R}^{\R^m}(0))$ and $u \in G^{s}(\overline{V})$ thus, it follows from \eqref{isomorphic} that we can find $h>0$ such that for every $\alpha', \alpha''\in\ZN_+$, we have 
\begin{equation}\label{expectedinequalities}
\|\XX^{\alpha''}u\|_{C^{1}(\overline{V})}\leq h^{|\alpha''|+1} \|u\|_{h,\overline{V}} (|\alpha''|+1)!^{s} 
\ \ \text{and}\ \  \|\XX^{\alpha'}\chi\|_{C^{1}(B_{R}^{\R^m})}\leq h^{|\alpha'|+1} \|\chi\|_{h,\overline{B_{R}^{\R^m}(0)}} (|\alpha'|+1)!^{s}.
\end{equation}

We may use \eqref{des:inter0} and \eqref{expectedinequalities} to obtain
\begin{align}\label{des:inter1}
\displaystyle  \sup_{\overline{V}}\frac{|\XX^{\alpha} G^{\chi}_\tau[u]- \XX^{\alpha}(\chi u)|}{(2h)^{|\alpha|} |\alpha|!^{s}} &\leq \Big(\tau^{-1/2}\tilde{C}      +e^{-A^{2}/2} \hat{C}_A \Big) \frac{B A}{\pi^{\frac{ m}{2}}} h^{2}2^{3} \|\chi\|_{h,\overline{B_{R}(0)}}\|u\|_{h,\overline{V}},
\end{align}
where we used that $(|\alpha|+2)!^{s}\leq 2^{s (|\alpha|+3)} |\alpha|!^{s}$.
Now, for a given $\epsilon>0$ choose $A>0$ so that $e^{-A^{2}/2} \hat{C}\leq \epsilon/2$ and then choose $\tau>1$ so that $\tau^{-1/2}\tilde{C}_A\leq \epsilon/2$ to conclude that
$G^{\chi}_\tau[u]$ converges to $\chi u$ in $G^s(\overline{V}; \MM,\LL)$.
\end{proof}

We would  like to point out that the proof  yields a slightly stronger version of Proposition~\ref{GtauFuncao}.  Denote by $\mathcal{B}\big(G^{s}_c(B_R^{\R^m}(0))\times G^s(\overline{V}), G^s(\overline{V})\big)$ the space of the bilinear  continuous operator and denote by $P$ the bilinear operator defined by the usual product, i.e., $P(\chi, u)= \chi u$.
  \begin{Pro}
    The operator $G_\tau: G^s_c(B_R^{\R^m}(0))\times G^s(\overline{V}) \lra G^s(\overline{V})$ define by $G_\tau(\chi, u)= G_\tau^\chi[u]$ is a bilinear and continuous. Moreover, the sequence of operators $G_\tau$ converges to $P$ in $\mathcal{B}\big(G^{s}_c(B_R^{\R^m}(0))\times G^{s}(\overline{V}); G^{s}(\overline{V})\big)$ as $\tau \lra \infty$. 
  \end{Pro}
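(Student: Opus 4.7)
The plan is to recognize that the proof of Proposition~\ref{GtauFuncao} already contains nearly everything we need: the key estimate \eqref{des:inter1} is already bilinear in $\chi$ and $u$, so it does not merely yield convergence at a fixed pair but uniform convergence on bounded products of Banach-type steps $G^{s,h}$. In this sense the proposition is a packaging of what was proved, not a new computation, and this is the route I would take.

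First I would note that the bilinearity of $G_\tau$ is immediate from the definition \eqref{eq Gtau}, since the integrand depends linearly on $\chi(y)$ with the other factors fixed, and linearly on $u(y,t)$ with the other factors fixed. The continuity of the bilinear operator $P$ given by pointwise multiplication into $G^s(\overline V)$ is the standard algebra property of Gevrey classes, and comes with an explicit bilinear estimate $\|\chi u\|_{h',\overline V} \leq C \|\chi\|_{h,\overline{B_R(0)}}\|u\|_{h,\overline V}$ for some $h'$ depending on $h$. For the continuity of each $G_\tau$ I would decompose
\[
G_\tau^\chi[u] = \bigl(G_\tau^\chi[u] - \chi u\bigr) + \chi u,
\]
control the second summand by the algebra property and the first summand directly by \eqref{des:inter1}. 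This gives continuity of $G_\tau$ from $G^{s,h}_c(\overline{B_R^{\R^m}(0)}) \times G^{s,h}(\overline V)$ into $G^{s,2h}(\overline V)$ for every $h>0$, which passes to the inductive limits by the universal property.

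For the convergence $G_\tau \to P$, I would reread \eqref{des:inter1} as the \emph{bilinear} bound
\[
\|G_\tau^\chi[u]-\chi u\|_{2h,\overline V}\;\leq\; c(\tau,A)\,\|\chi\|_{h,\overline{B_R^{\R^m}(0)}}\,\|u\|_{h,\overline V},
\]
where $c(\tau,A) = \bigl(\tau^{-1/2}\tilde C + e^{-A^2/2}\hat C\bigr)\cdot \frac{BA}{\pi^{m/2}} h^2 2^3$; this can be made arbitrarily small by first choosing $A$ large and then $\tau$ large, independently of $\chi$ and $u$ in their respective unit balls at level $h$. Since any bounded subset $\mathcal B_1\times \mathcal B_2$ of $G^s_c(B_R^{\R^m}(0))\times G^s(\overline V)$ is, by the $(LF)$-structure of the domain, contained in a common step $G^{s,h}_c\times G^{s,h}$ for a suitable $h>0$, the estimate above yields uniform convergence of $G_\tau$ to $P$ on $\mathcal B_1\times\mathcal B_2$, which is exactly convergence in the natural topology of $\mathcal B\bigl(G^s_c(B_R^{\R^m}(0))\times G^s(\overline V);\,G^s(\overline V)\bigr)$ of uniform convergence on bounded sets.

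The main point that requires explicit care --- and the only real obstacle --- is the $(LF)$-step: one must justify that a bounded set in the inductive limit $G^s_c(B_R^{\R^m}(0))\times G^s(\overline V)$ actually sits inside a single $G^{s,h}_c\times G^{s,h}$, so that a single $h$ can be fed into \eqref{des:inter1}. This is a standard fact for countable inductive limits of Banach spaces and, combined with the bilinear estimate already obtained, the rest of the argument is routine.
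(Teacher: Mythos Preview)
Your proposal is correct and matches the paper's approach: the paper gives no separate proof but states the proposition as an immediate consequence of the proof of Proposition~\ref{GtauFuncao}, and your reading of \eqref{des:inter1} as a bilinear bound uniform on each Banach step $G^{s,h}$ is exactly that unpacking. One small caveat on terminology: the fact that bounded sets in $G^s_c(B_R^{\R^m}(0))$ and $G^s(\overline V)$ lie in a single step holds because these are $(DFS)$ spaces (compact inductive limits of Banach spaces), not merely because they are countable $(LF)$-type limits, where regularity can fail in general.
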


  Observe that if we take $\chi= 1$ in $B_{R/2}^{\R^m}(0)$ and define $U:= B_{S}^{\R^m}(0)\times B_{T}^{\R^n}(0)$, where $0<S\leq R/2$ and $0< T\leq R$, then $G^{\chi}_\tau[u]$ converges to $u$ in $G^{s}(U)$.

Next, we recall  (see, for instance,  \cite[pag. 59-60]{bch_iis}) that there exists a positive constant $T<R$ such that 
\begin{equation}\label{est:faseRtau}
\big|e^{-\tau \langle Z(x,t)-Z(y,t')\rangle^2}\big| \le e^{-\tau R^2/33}, 
\end{equation}
 for all $(x,t) \in B^{\mathbb{R}^m}_{R/4}(0)\times B^{\mathbb{R}^n}_{T}(0)\textrm{ and }  (y,t') \in \{y \in \R^m:|y|\ge R/2\} \times B_T^{\R^n}(0)$.
From now on, we fix the open set $U$ in the statement of Theorem~\ref{BTAPPthm} to be $B_{R/4}^{\R^m}(0)\times B_{T}^{\R^n}(0)$.
The proof of Theorem~\ref{BTAPPthm},  in $G^{s}$, will be complete once we proof the following result.

\begin{Pro}\label{RtauFuncao}
 Let $\chi\in G^{s}_c(B^{\R^m}_{R}(0))$, with $\chi=1$ in $B^{\R^m}_{R/2}(0)$  and $u \in G^{s}(\Omega)$ that is a solution of $\mathcal{L}$ in $W$. Then $R^{\chi}_\tau[u]$ converges to $0$ in $G^s(U)$ when $\tau \lra \infty$. 
\end{Pro}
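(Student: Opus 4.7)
The plan is to combine a Stokes-type integral representation of $R_\tau^\chi[u]$ (which localizes the integrand to the annulus where $\dd\chi$ lives) with the commutator formula \eqref{XalphaGtau} extended to $R_\tau$, and then exploit \eqref{est:faseRtau}.

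First, the identity \eqref{XalphaGtau} carries over from $G_\tau$ to $R_\tau$:
\[
  \XX^\alpha R_\tau^\chi[u] = \sum_{\alpha'+\alpha''=\alpha}\binom{\alpha}{\alpha'} R_\tau^{\XX^{\alpha'}\chi}[\XX^{\alpha''}u].
\]
Indeed, the Leibniz formulas \eqref{LeibnizM}--\eqref{LeibnizL} hold in the same form for $E_\tau$ (the same integration-by-parts in $y$ works with the kernel frozen at $t'=0$), so the formula for $R_\tau=G_\tau-E_\tau$ follows by subtraction. Moreover, $\XX^{\alpha''}u$ remains a solution of $\mathcal L$ because $\LL_j\XX^{\alpha''}u=\XX^{\alpha''}\LL_j u=0$, by the pairwise commutativity of $\LL_1,\dots,\LL_n,\MM_1,\dots,\MM_m$. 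For any such solution $v$ and any $\psi\in G^s_c(B_R^{\R^m}(0))$ I then derive the representation
\[
R_\tau^{\psi}[v](x,t) = \Big(\frac{\tau}{\pi}\Big)^{\!m/2}\!\!\int_{\R^m\times\gamma_t} v(y,t')\, e^{-\tau\langle Z(x,t)-Z(y,t')\rangle^2}\, \dd\psi(y)\wedge \dd Z_1\wedge\cdots\wedge \dd Z_m,
\]
by applying Stokes' theorem to the $m$-form $\omega_\tau=e^{-\tau\langle Z(x,t)-Z(y,t')\rangle^2}\psi(y)v(y,t')\,\dd Z_1\wedge\cdots\wedge \dd Z_m$ on the $(m{+}1)$-chain $\R^m\times\gamma_t$, with $\gamma_t$ a path from $0$ to $t$ in $\R^n$. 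The identity $\dd\omega_\tau=v\,e^{-\tau\langle\cdots\rangle^2}\,\dd\psi\wedge \dd Z_1\wedge\cdots\wedge \dd Z_m$ holds since $\dd v\wedge \dd Z_1\wedge\cdots\wedge \dd Z_m=\sum_k\MM_k v\,\dd Z_k\wedge \dd Z_1\wedge\cdots\wedge \dd Z_m=0$ (using $\LL_j v=0$), and the differential of the exponential factor in $(y,t')$ produces only $\dd Z_k$'s that vanish upon wedging. The crucial point is that $\dd\psi$ vanishes on $B_{R/2}^{\R^m}(0)$: for $\psi=\XX^{\alpha'}\chi$ with $|\alpha'|\geq 1$, $\psi$ itself vanishes there (since $\chi\equiv 1$ on $B_{R/2}^{\R^m}(0)$), while for $\alpha'=0$, $\dd\chi$ vanishes there. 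Hence effectively $y\in\{R/2\leq|y|\leq R\}$, where \eqref{est:faseRtau} gives $|e^{-\tau\langle Z(x,t)-Z(y,t')\rangle^2}|\leq e^{-\tau R^2/33}$ uniformly for $(x,t)\in U$.

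Combining these ingredients with the Gevrey bounds \eqref{expectedinequalities} on $\chi$ and $u$,
\[
|R_\tau^{\XX^{\alpha'}\chi}[\XX^{\alpha''}u](x,t)| \leq C\,\tau^{m/2}\, e^{-\tau R^2/33}\, h^{|\alpha|+2}\, (|\alpha'|+1)!^s\, (|\alpha''|+1)!^s\, \|\chi\|_{h,\overline{B_R^{\R^m}(0)}}\, \|u\|_{h,\overline V}.
\]
Summing over $\alpha'+\alpha''=\alpha$, using $\binom{\alpha}{\alpha'}\alpha'!\,\alpha''!=\alpha!$, $(|\alpha'|+1)!(|\alpha''|+1)!\leq(|\alpha|+2)!\leq 4^{|\alpha|+2}|\alpha|!$, and $|\alpha|!\leq N^{|\alpha|}\alpha!$, yields a uniform-in-$\alpha$ estimate
\[
\sup_{(x,t)\in U}\frac{|\XX^\alpha R_\tau^\chi[u](x,t)|}{(h')^{|\alpha|}\alpha!^s}\leq C'\,\tau^{m/2}\, e^{-\tau R^2/33}\,\|\chi\|_{h,\overline{B_R^{\R^m}(0)}}\,\|u\|_{h,\overline V},
\]
with $h'=4hN^s$. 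The right-hand side vanishes as $\tau\to\infty$, proving convergence in $G^s(U;\MM,\LL)$, hence in $G^s(U)$ by \eqref{isomorphic}.

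The main obstacle is justifying the commutator formula for $R_\tau$ rigorously (which amounts to proving the Leibniz identities for $E_\tau$ by the same integration-by-parts as for $G_\tau$) and matching the Gevrey combinatorics to the binomial sum uniformly in $\alpha$. The essential feature that makes the argument work is that, thanks to the commutator formula, all $\XX^\alpha$-derivatives fall onto the amplitudes $\chi$ and $u$ rather than onto the Gaussian kernel, so no $\tau^{|\alpha|/2}$ factors arise from differentiating the kernel and the exponential decay $e^{-\tau R^2/33}$ cleanly beats the polynomial $\tau^{m/2}$ uniformly in $\alpha$.
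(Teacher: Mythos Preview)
Your commutator identity for $R_\tau$ rests on the claim that the Leibniz rule \eqref{LeibnizL} for $\LL_j$ carries over to $E_\tau$, and this is false. Since $E_\tau^\chi[u](x,t)$ is an entire function of $Z(x,t)$ one has $\LL_j E_\tau^\chi[u]=0$ identically, whereas $E_\tau^\chi[\LL_j u]+E_\tau^{\LL_j\chi}[u]$ is generally nonzero: already for $u\equiv 1$ one obtains $E_\tau^{\LL_j\chi}[1]$, an integral over the annulus $R/2\le|y|\le R$ that has no reason to vanish. The structural obstruction is that $E_\tau^\chi[\cdot]$ sees only the trace $u(\cdot,0)$ and therefore cannot encode $\partial_{t_j}u$, which enters $\LL_j u$; no ``integration by parts in $y$ with the kernel frozen at $t'=0$'' can manufacture this missing information. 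Consequently the identity $\XX^\alpha R_\tau^\chi[u]=\sum_{\alpha'+\alpha''=\alpha}\binom{\alpha}{\alpha'}R_\tau^{\XX^{\alpha'}\chi}[\XX^{\alpha''}u]$ is unjustified whenever $\XX^\alpha$ contains an $\LL_j$, and your estimate collapses. (A secondary issue: the amplitudes $\XX^{\alpha'}\chi$ depend on $t$ through the coefficients of $\MM_k,\LL_j$, so they are not elements of $G^s_c(B_R^{\R^m}(0))$ as your Stokes representation presumes.)

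The paper proceeds quite differently: it writes $R_\tau^\chi[u]$ once via Stokes as in \eqref{recall:Rtau} and then differentiates that single integral directly with $\partial_x^\alpha\partial_t^\beta$ in the standard coordinates, controlling the derivatives of the Gaussian kernel by the Fa\`a~di~Bruno estimate of Lemma~\ref{lem:usandoFdB}. This costs an extra factor $e^{s\tau^{1/s}}$, but since $s>1$ it is absorbed by $e^{-\tau R^2/33}$. Your idea of pushing all derivatives onto $\chi$ and $u$ is appealing (it would avoid Fa\`a~di~Bruno and the $e^{s\tau^{1/s}}$ loss), and it can be salvaged---for instance by using $\LL^\beta E_\tau^\chi[u]=0$ directly when $|\beta|\ge1$, so that $\LL^\beta\MM^\gamma R_\tau^\chi[u]=\LL^\beta\MM^\gamma G_\tau^\chi[u]$, and then invoking the (valid) commutator formula for $G_\tau$ alone, whose amplitudes $\LL^\beta\MM^{\gamma'}\chi$ are supported in the annulus so that \eqref{est:faseRtau} applies---but as written the key step is not correct.
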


\begin{proof}
It is a consequence of Stokes' theorem  that we  may write $R^{\chi}_\tau[ u]$ given in \eqref{eq Rtau} as
\begin{equation}
R^{\chi}_\tau[ u](x,t)= \Big(\frac\tau{\pi}\Big)^{\tfrac m2}\! \sum_{j=1}^n\int_{\R^m\times[0,t]}  e^{-\tau \langle Z(x,t)-Z(y,t')\rangle^2} (\LL_j\chi)(y,t')u(y,t')  \,{\rm det} Z_x(y,t')\,\dd t_j'\wedge \dd y.
\end{equation}
Since $\chi(y)=1$ for $|y|<R/2$ and ${\rm supp}\,\chi\subset B_R^{\R^m}(0)$, $\LL_j\chi$ vanishes for $\{|y|\le R/2\}\cup\{|y|\ge R\}$, 
 we can write
\begin{equation}\label{recall:Rtau}
R^{\chi}_\tau[ u](x,t)= \Big(\frac\tau{\pi}\Big)^{\tfrac m2}\! \sum_{j=1}^n\int_{A(\frac{R}{2},R)} \int_0^1  e^{-\tau\langle Z(x,t)-Z(y,r t)\rangle^2} (L_j\chi)(y,rt)u(y,rt)  \,{\rm det} Z_x(y,rt) t_j\dd r\dd y,
\end{equation}
where $A(\frac{R}{2},R):= \{y \in \mathbb{R}^m: \frac{R}{2} < |y| < R\}.$ For each $(\alpha, \beta)\in\Z^m_+\times\Z^n_+$ we may differentiate under the integration sign the expression in the right hand-side of \eqref{recall:Rtau} to obtain
\begin{align}\label{recall:RtauDer}
\partial_x^\alpha \partial_t^\beta &R^{\chi}_\tau[ u](x,t)
\\[6pt]
&= \Big(\frac\tau{\pi}\Big)^{\tfrac m2}\! \sum_{j=1}^n\int_{A(\frac{R}{2},R)}\int_0^1 \!\partial_x^\alpha\partial_t^\beta 
          \big\{e^{-\tau\langle Z(x,t)-Z(y,rt)\rangle^2} (L_j\chi)(y,rt)u(y,rt)  \,{\rm det} Z_x(y,rt)t_j\big\} \dd r\dd y 
\notag\\[6pt]
&= \Big(\frac\tau{\pi}\Big)^{\tfrac m2}\! \sum_{j=1}^n \sum_{\gamma\le\beta}{\beta \choose \gamma}\int_{A(\frac{R}{2},R)}\int_0^1 \Big\{\partial_x^\alpha\partial_t^\gamma \big\{e^{-\tau\langle Z(x,t)-Z(y,rt)\rangle^2}\big\} \times
\notag\\[6pt]
&\hskip6.5cm\times\partial_t^{\beta-\gamma}\big\{ (L_j\chi)(y,rt)u(y,rt) \, \,{\rm det} Z_x(y,rt)t_j\big\} \Big\}\dd r\dd y. \notag
\end{align}

We can use Lemma~\ref{lem:usandoFdB} with $f(y,r,x,t)=[Z(x,t)-Z(y,rt)]^2$ yielding that there are constants $C, h>0$ for which we have 
\begin{align}\label{est:usandoFdBa}
\displaystyle\left| \partial_x^\alpha\partial_t^\gamma\big\{e^{-\tau \langle Z(x,t)-Z(y,rt)\rangle^2}\big\} \right|
\leq C h^{|\alpha|+|\gamma|} (|\alpha|+|\gamma|)!^{s}  \,e^{-\tau\Re\langle Z(x,t)-Z(y,r t)\rangle^2+ s \tau^{1/s}}
\end{align}
for every $(x,y,t,r)\in B_{R/2}^{\R^m}(0)\times  A(\frac{R}{2},R)\times B_{T}^{\R^n}(0)\times [0,1].$

Since there exist $\tilde{h}>0$ so that
$(L_j\chi)\,u \det Z_x|_{\overline{V}}\in G^{s, \tilde{h}}(\overline{V})$ for each $j\in\{1,\dots,n\}$, we can use \eqref{est:faseRtau}, \eqref{recall:RtauDer} and \eqref{est:usandoFdBa} to find a constant $\tilde{C}>0$ independent of $\alpha, \beta$, $\gamma$ and $\tau$ such that
\begin{align}\label{interm}
  \displaystyle \big| \partial_x^\alpha\partial_t^\beta R^{\chi}_\tau[ u](x,t) \big| &\le \tilde{C} \bigg( \sum_{\gamma\leq \beta} h^{|\alpha|+|\gamma|} \tilde{h}^{|\beta-\gamma|}{\beta \choose \gamma} (|\alpha|+|\gamma|)!^{s}(|\beta- \gamma|)!^{s} \bigg) \,  \tau^{\tfrac m2}\, e^{s \tau^{1/s}-\tau R^2/33} \notag
\\
&\leq \tilde{C}    \tau^{\tfrac m2} e^{s \tau^{1/s}-\tau R^2/33} \big( h+\tilde{h}\big)^{|\alpha|+|\beta|} 2^{|\beta|} (|\alpha|+|\beta|)!^{s} , 
\end{align}
for every $(x,t) \in U.$
Thus, $R_{\tau}^{\chi}[u]$ converges to $0$  in $G^s(U)$ when $\tau$ converges to $\infty$.
\end{proof}

\subsection{Proof of Baouendi-Treves  approximation theorem  in $\mathcal{D}'_s$}\label{ap2} 

Given $\chi \in G_c^s(B_R^{\R^{m}}(0))$ we will first need to extend the definitions of $E_\tau^{\chi}[u], G_\tau^{\chi}[u]$ and $R_\tau^{\chi}[u]$ when $u \in {\mathcal{D}}'_s(W)$ is a solution of $\mathcal L$.

The definitions of  $E_\tau^{\chi}[u]$ and consequently $R_\tau^{\chi}[u]$ will strongly use the fact $u$ is a solution of $\mathcal L$ which guarantee that the pullback of $u$ to the submanifolds $\{(x,t): t=\text{constant}\}$ are well defined in the sense of ultradistributions, see Appendix~\ref{AppendixB}.

We can, however, provide a definition for $G_\tau^{\chi}[u]$ for every $u \in \mathcal{D}_s'(V)$ and as a consequence we will proof the convergence of $G_\tau^{\chi}[u]$ to $\chi u$ in $ {\mathcal{D}}_s'(V)$ when $\tau$ goes to $+\infty$ regardless wether $u$ is a solution of $\mathcal L$ or not.

\begin{Def}
Let $u \in {\mathcal{D}}_s'(V)$ and fix $\chi \in G^s_c(B_R^{\R^{m}}(0))$. We define $G_\tau^{\chi}[u]\in {\mathcal{D}}_s'(V)$, acting on $\varphi\in G^{s}_c(V)$ as
\begin{equation}\label{GtauD'}
  G_\tau^{\chi}[u](\varphi):= u_{(x't)}\big( \chi(x') G^{\tilde{\chi}}_\tau[\psi](x',t) \det Z_x(x',t)\big), 
\end{equation}
where $\tilde{\chi}$ is any element of  $G^{s}_c(B_R^{\R^{m}}(0))$ equal to $1$ in the projection of the support of $\varphi$ in $\R^{m}$ and $\psi(x,t):= \varphi(x,t)/\det Z_x(x,t).$  
\end{Def}

\begin{remark}\label{ConvGtauD'}
Note that, it follows immediately from \eqref{GtauD'} and Proposition~\ref{GtauFuncao} that for every $\varphi \in G^{s}(V)$, $G^{\chi}_\tau[u](\varphi)$ converges to $(\chi u)(\varphi)$, consequently, $G^{\chi}_\tau[u]$ converges to $\chi u$ in ${\mathcal{D}}_s'(V).$
\end{remark}

Now we will define $E_\tau^{\chi}[u]$ when $u \in {\mathcal{D}}_s'(V)$
is a solution of $\mathcal{L}$ in $W$. To do so, we will follow the results and notations from Appendix~\ref{AppendixB}, in particular, the definition of the trace of an ultradistribution \eqref{TRACE}.
Given $t\in B_{R}^{\R^{n}}(0)$ we can consider $\iota_t: B_{R}^{\R^m}(0) \lra V$ defined by $\iota_t(x)= (x,t)$. Since $u$ is a solution of $\mathcal{L}$ it holds that $WF_{s}(u) \cap \{(x,t, 0, \theta), (x,t) \in W, \tau \neq 0\}$ is empty. 
This means that, for each $t\in B_{R}^{\R^{n}}(0)$ we can define $\iota_t^{\ast}u\in {\mathcal{D}}_s'(B_R^{\R^{m}}(0))$, the trace of $u$ at $t$, by 
\begin{align}
  (\iota_t^{\ast}u)( \varphi )&= \frac{1}{(2 \pi)^{N}} \int_{\R^N} \bigg(\mathcal{F}(\lambda u)(\eta) e^{it \theta} \int_{\R^m} \varphi(x) e^{i x \sigma} \dd x\bigg) \dd \sigma \dd \theta, \quad \forall \varphi \in G^{s}_c(B_{R}^{\R^m}(0)),
\end{align}
where $\lambda\in G^{s}_c(W)$ is identically $1$ on $V$ and $\mathcal{F}(\lambda u)$ stands for the Fourier transform of $\lambda u.$ In the Appendix~\ref{AppendixB}, it is  shown that $\iota^{\ast}_tu$ is an ultradistribution with the property that, for each fixed $\varphi\in G^{s}_c(W)$, the application $B_R^{\R^n}(0) \ni t \mapsto \iota_t^{\ast}u(\varphi)$ is  of class $G^{s}.$

Moving on, notice that
\begin{align}
  \bigg(\frac{\tau}{\pi}\bigg)^{\tfrac m2}(\iota_0^{\ast}u)_{x'}\big( e^{-\tau\langle z- Z(x', 0)\rangle^{2}}  \chi(x') \det Z_x(x',0) \big)
\end{align}
is an entire function in $z\in \C^m$. So we can define $E_\tau^{\chi}[u]\in G^{s}(\R^N)$ as
\begin{align}\label{EtauD'}
  E_{\tau}^{\chi}[u](x,t):= \bigg(\frac{\tau}{\pi}\bigg)^{\tfrac m2}(\iota_0^{\ast}u)_{x'}\big( e^{-\tau\langle Z(x,t)- Z(x', 0)\rangle^{2}}  \chi(x') \det Z_x(x',0) \big).
\end{align}
Also, when $u$ is a solution of $\mathcal{L}$, one can verify that $G^{\chi}_\tau[u]$ given in \eqref{GtauD'} can be rewritten as
\begin{align}\label{OutraGtau}
  G^{\chi}_\tau[u](x,t)= \bigg(\frac{\tau}{\pi}\bigg)^{\tfrac m2}(\iota_t^{\ast}u)_{x'}\big( e^{-\tau\langle Z(x,t)- Z(x', t)\rangle^{2}}  \chi(x') \det Z_x(x',t) \big),
\end{align}
in this case one can check that $G^{\chi}_\tau[u] \in G^s(V)$, see Proposition~\ref{Leibiniz}.

Still assuming that $u$ is a solution of $\mathcal{L}$, we note that, for each $\varphi\in G^{s}_c(B_{R}^{\R^m}(0))$, we can use \eqref{OutraGtau} and then \eqref{eq Gtau} to write
\begin{align}\label{NewGtauD'}
  \int_{B_{R}^{\R^m}(0)} G^{\chi}_\tau[u](x,t) \varphi(x) \dd x&= \bigg(\frac{\tau}{\pi}\bigg)^{\tfrac m2}\!\!\int_{B_R^{\R^m}(0)}(\iota_t^{\ast}u)_{x'}\big( e^{-\tau\langle Z(x,t)- Z(x', t)\rangle^{2}}  \chi(x') \det Z_x(x',t) \big)\varphi(x) \dd x
\notag\\[4pt]
  &= (\iota_{t}^{\ast} u)_{x'} \big( G^{\tilde{\chi}}_{\tau} [\psi](x',t) \chi(x') \det Z_x(x',t)\big)
\end{align}
where $\psi(x,t)= \varphi(x)/\det Z_x(x,t)$ and $\tilde{\chi}\in G^{s}_c(B_R^{\R^{m}}(0))$ is equal to 1 on the support of $\varphi$. Thus, one can use Proposition~\ref{GtauFuncao} to obtain that for each fixed $t$, it holds
\begin{align}\label{GdoTraco}
 \lim_{\tau\lra \infty}  \int_{B_{R}^{\R^m}(0)} G^{\chi}_\tau[u](x,t) \varphi(x) \dd x &= (\iota_{t}^{\ast} u)_{x'} \big( \varphi(x')\chi(x') \big),  
\end{align}
for every $\varphi \in G^{s}_c(B_{R}^{\R^m}(0)).$
Moving on, we will now work on the error term $R^{\chi}_\tau[u]=E_{\tau}^{\chi}[u]-G_{\tau}^{\chi}[u]$ when $u\in{\mathcal D}'_s(W)$ is a solution of $\mathcal L$. The goal is to obtain an expression analogous to
\eqref{recall:Rtau}.

\begin{Pro}
Let $u\in{\mathcal D}_s'(W)$ be a solution of $\mathcal L$ and $\chi\in G^{s}_c(B^{\R^m}_{R}(0))$ then
\begin{align}\label{Fidentity}
  R^{\chi}_{\tau}[u](x,t)= \int_{[0,t]} \bigg(\frac{\tau}{\pi}\bigg)^{\tfrac m2} \sum_{j=1}^n   (\iota^{\ast}_{t'}u)_{x'}\big( e^{-\tau\langle Z(x,t)- Z(x', t')\rangle^{2}}\LL_j\chi(x') \det Z_x(x',t')\big)\dd t'_j 
\end{align}
for every $(x,t) \in V.$
\end{Pro}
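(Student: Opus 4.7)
The plan is to reduce the identity to a pointwise derivative calculation via a fundamental‑theorem‑of‑calculus argument. First I would introduce, for fixed $(x,t)\in V$, the single function
\[
H(s):=\Big(\frac{\tau}{\pi}\Big)^{\tfrac m2}(\iota_{s}^{\ast}u)_{x'}\Big(e^{-\tau\langle Z(x,t)-Z(x',s)\rangle^{2}}\chi(x')\det Z_x(x',s)\Big),\qquad s\in B_R^{\R^n}(0).
\]
By Appendix~\ref{AppendixB}, $H$ is of class $G^{s}$ in $s$, and definitions \eqref{EtauD'} and \eqref{OutraGtau} give $H(0)=E_\tau^\chi[u](x,t)$ and $H(t)=G_\tau^\chi[u](x,t)$. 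Parametrizing the segment $[0,t]$ by $s=rt$, $r\in[0,1]$, the fundamental theorem of calculus yields
\[
R_\tau^\chi[u](x,t)=H(t)-H(0)=\sum_{j=1}^n\int_{[0,t]}(\partial_{s_j}H)(s)\,\dd s_j,
\]
so the proof reduces to showing, for each $j$,
\[
(\partial_{s_j}H)(s)=\Big(\frac{\tau}{\pi}\Big)^{\tfrac m2}(\iota_s^{\ast}u)_{x'}\Big(e^{-\tau\langle Z(x,t)-Z(x',s)\rangle^2}\LL_j\chi(x',s)\det Z_x(x',s)\Big).
\]

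To verify this pointwise identity I would differentiate $H$ under the trace. Setting $\psi(x',s):=e^{-\tau\langle Z(x,t)-Z(x',s)\rangle^{2}}\chi(x')\det Z_x(x',s)$ and using the commutation $\partial_{s_j}\iota_s^{\ast}u=\iota_s^{\ast}(\partial_{t_j}u)$, extracted from the Fourier‑integral formula \eqref{TRACE}, I obtain
\[
\partial_{s_j}H(s)=\Big(\frac{\tau}{\pi}\Big)^{\tfrac m2}\!\Big[\iota_s^{\ast}(\partial_{t_j}u)\bigl(\psi(\cdot,s)\bigr)+(\iota_s^{\ast}u)\bigl(\partial_{s_j}\psi(\cdot,s)\bigr)\Big].
\]
In the coordinates of Section~\ref{S2} the local expression is $\LL_j=\partial_{t_j}+\sum_{k=1}^m a_{jk}(x,t)\partial_{x_k}$ with $G^s$ coefficients $a_{jk}$, and the assumption $\LL_j u=0$ gives $\partial_{t_j}u=-\sum_k a_{jk}\partial_{x_k}u$. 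Combining $\iota_s^{\ast}(a_{jk}\partial_{x_k}u)=a_{jk}(\cdot,s)\partial_{x'_k}\iota_s^{\ast}u$ with an integration by parts in $x'$ folds the two contributions above into
\[
\partial_{s_j}H(s)=\Big(\frac{\tau}{\pi}\Big)^{\tfrac m2}(\iota_s^{\ast}u)_{x'}\!\Big(\LL_j\psi+\Big(\sum_{k=1}^m\partial_{x'_k}a_{jk}\Big)\psi\Big).
\]

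The last step is to simplify the inner argument. Since $\LL_j Z_k\equiv 0$, the exponential is annihilated by $\LL_j$, and the product rule gives
\[
\LL_j\psi+\Big(\sum_k\partial_{x'_k}a_{jk}\Big)\psi= e^{-\tau\langle Z(x,t)-Z(x',s)\rangle^{2}}\LL_j\chi\cdot\det Z_x+e^{-\tau\langle Z(x,t)-Z(x',s)\rangle^{2}}\chi\Big(\partial_{s_j}\det Z_x+\sum_k\partial_{x'_k}(a_{jk}\det Z_x)\Big).
\]
The bracket vanishes identically: the divergence‑type identity $\partial_{s_j}\det Z_x+\sum_k\partial_{x'_k}(a_{jk}\det Z_x)=0$ follows from $\dd(\dd Z_1\wedge\cdots\wedge\dd Z_m)=0$ together with $\iota_{\LL_j}(\dd Z_1\wedge\cdots\wedge\dd Z_m)=0$ (read off the coefficient of $\dd t_j\wedge\dd x_1\wedge\cdots\wedge\dd x_m$), equivalently from $\LL_j Z_k=0$ and Jacobi's formula for $\det Z_x$. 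Substituting back yields the desired expression for $\partial_{s_j}H$.

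The main obstacle I foresee is the rigorous justification, in the ultradistributional setting, of the two operations inside the trace: the commutation $\partial_{s_j}\iota_s^{\ast}u=\iota_s^{\ast}(\partial_{t_j}u)$ and the identity $\iota_s^{\ast}(a_{jk}\partial_{x_k}u)=a_{jk}(\cdot,s)\partial_{x'_k}\iota_s^{\ast}u$ together with the ensuing integration by parts. Both ultimately hinge on the Fourier‑integral representation \eqref{TRACE}, on the smoothness of $s\mapsto \iota_s^{\ast}u$ as a $\mathcal{D}'_s(B_R^{\R^m}(0))$‑valued map, and on the fact that the cutoff $\lambda$ used there (identically $1$ on $V$) does not contribute to the trace on $V$; all of these are supplied by Appendix~\ref{AppendixB}.
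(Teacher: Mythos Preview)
Your argument is correct and takes a genuinely different route from the paper's proof. The paper proceeds by \emph{regularization}: it introduces the auxiliary $m$-forms
\[
\omega_{\tau,\tilde\tau}^{(x,t)}(x',t')=\Big(\tfrac{\tau}{\pi}\Big)^{m/2}e^{-\tau\langle Z(x,t)-Z(x',t')\rangle^2}\,G^{\chi}_{\tilde\tau}[u](x',t')\,\tilde\chi(x')\,\dd Z(x',t'),
\]
whose coefficients are smooth because $G^{\chi}_{\tilde\tau}[u]\in G^{s}(V)$, applies Stokes' theorem to obtain $I_1^{\tau,\tilde\tau}=I_2^{\tau,\tilde\tau}-I_3^{\tau,\tilde\tau}$, computes $\dd\omega_{\tau,\tilde\tau}$ via the Leibniz rules \eqref{LeibnizL}, and then lets $\tilde\tau\to\infty$ using \eqref{GdoTraco}. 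Your approach, by contrast, fixes $(x,t)$ and applies the fundamental theorem of calculus directly to the $G^{s}$ map $s\mapsto H(s)$, identifying $\partial_{s_j}H$ with the claimed integrand through the trace identities $\partial_{s_j}\iota_s^{\ast}u=\iota_s^{\ast}(\partial_{t_j}u)$, $\iota_s^{\ast}(a_{jk}\partial_{x_k}u)=a_{jk}(\cdot,s)\partial_{x'_k}\iota_s^{\ast}u$, and the divergence identity $\partial_{s_j}\det Z_x+\sum_k\partial_{x'_k}(a_{jk}\det Z_x)=0$, which you derive correctly from $\LL_jZ_k=0$ and Jacobi's formula.

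What each approach buys: the paper's regularization avoids ever manipulating traces of the ultradistribution itself---all differentiations happen on the smooth object $G^{\chi}_{\tilde\tau}[u]$, and the only fact about traces needed is the convergence \eqref{GdoTraco}. Your approach is shorter and conceptually more transparent (it is literally the ultradistributional analogue of the smooth Stokes computation in \eqref{recall:Rtau}), but it leans on the two trace identities you single out at the end. Note that Appendix~\ref{AppendixB} does not \emph{state} these identities; Proposition~\ref{Leibiniz} records the Leibniz rule with the abstract symbol $\partial_t^{\alpha}(\iota_t^{\ast}u)$ rather than $\iota_t^{\ast}(\partial_t^{\alpha}u)$, and the compatibility with $x$-derivatives and $G^{s}$ multiplication is not written out. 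Both follow from the Fourier formula \eqref{TRACE} together with the cutoff-independence in the Final Remark (the extra $(\partial_{t_j}\lambda)u$ term that arises has support away from $V$ and hence does not contribute to the trace there), so your proof goes through, but you should be explicit that these two lines require a short verification beyond what Appendix~\ref{AppendixB} literally provides.
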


\begin{proof}
To begin with, consider $\omega_{\tau,\tilde{\tau}}$ to be the sequence of $m-$forms with $G^{s}$ coefficients defined by
\begin{align*}
  \omega_{\tau,\tilde{\tau}}^{(x,t)}(x',t'):= \bigg(\frac{\tau}{\pi}\bigg)^{\tfrac m2}  e^{-\tau\langle Z(x,t)- Z(x', t')\rangle^{2}} G^{\chi}_{\tilde{\tau}}[u](x',t') \tilde{\chi}(x') \dd Z(x',t') 
\end{align*}
where $\dd Z= \dd Z_1\wedge \cdots\wedge \dd Z_m$ and $\tilde{\chi}\in G^{s}_c(B^{\R^m}_R(0))$ is equal to $1$ in $\supp \chi$. 
Also, let us define 
\begin{align*}
I_1^{\tau,\tilde{\tau}}(x,t)&=  \int_{\R^m\times [0,t]} \dd\omega_{\tau,\tilde{\tau}}^{(x,t)}(x',t'),\\
I_2^{\tau,\tilde{\tau}}(x,t)&=  \int_{\R^m} \omega_{\tau,\tilde{\tau}}^{(x,t)}(x',t),\ \text{and}\\
  I_3^{\tau,\tilde{\tau}}(x,t)&= \int_{\R^m} \omega_{\tau,\tilde{\tau}}^{(x,t)}(x',0).
\end{align*}
Thanks to Stokes' theorem it holds that $I_1^{\tau,\tilde{\tau}}(x,t)= I_2^{\tau,\tilde{\tau}}(x,t)- I_3^{\tau,\tilde{\tau}}(x,t).$
Now for any given $\varphi_1\in G^s_c(B^{\R^m}_R(0))$, $\varphi_2\in G^{s}_c(B^{\R^n}_R(0))$. Applying $I_2^{\tau,\tilde{\tau}}$ to $\varphi_1 \otimes \varphi_2$ in the sense of ultradistributions we obtain
\begin{align*}
   I_2^{\tau,\tilde{\tau}} (\varphi_1 \otimes \varphi_2)= \int_{B^{\R^m}(0)} \mathcal{I}^{\tau,\tilde{\tau}}_{\varphi_2}(x)\varphi_1(x) \dd x, 
\end{align*}
where
\begin{align*}
  \mathcal{I}^{\tau,\tilde{\tau}}_{\varphi_2}(x):= \bigg(\frac{\tau}{\pi}\bigg)^{\tfrac m2} \int_{B_R^{\R^n}(0)}\int_{\R^m} e^{-\tau \langle Z(x,t)- Z(x', t)\rangle^{2}} G^{\chi}_{\tilde{\tau}}[u](x', t) \tilde{\chi}(x') \varphi_2(t)\det Z_x(x',t) \dd x' \dd t.
\end{align*}
Thus, one can use \eqref{GdoTraco} to conclude that
\begin{align}\label{===}
  \lim_{\tilde{\tau} \lra \infty} \mathcal I^{\tau, \tilde{\tau}}_{\varphi_2}(x)&= \bigg(\frac{\tau}{\pi}\bigg)^{\tfrac m2} \int_{B_R^{\R^{n}}(0)}( \iota_t^{\ast} u)_{x'} \bigg(e^{-\tau \langle Z(x,t)- Z(x', t)\rangle^{2}}  \chi(x') \varphi_2(t)\det Z_x(x',t) \bigg)\dd t \notag
  \\  &=  \int_{B^{\R^n}_R(0)} G_\tau^{\chi}[u](x,t)  \varphi_2(t)\dd t.
\end{align}
If follows from identity \eqref{===} that $I^{\tau, \tilde{\tau}}_2$ converges to $G^{\chi}_\tau[u]$ in $\mathcal{D}_s'(V)$ as $\tilde{\tau} \lra \infty$. Analogously, $I_3^{\tau, \tilde{\tau}}$ converges to $E^{\chi}_\tau[u]$ in $\mathcal{D}_s'(V)$ as $\tilde{\tau} \lra \infty$. 
Therefore, all we have to do now is to focus on the next identity
\begin{align*}
  R^{\chi}_\tau[u](\varphi)= \lim_{\tilde{\tau} \lra \infty} I_{1}^{\tau,\tilde{\tau}}(\varphi), 
\end{align*}
for every $ \varphi \in G^{s}_c(V).$
Note that $ \dd\omega_{\tau, \tilde{\tau}}^{(x,t)}$ can be written as
\begin{align*}
   \dd \omega_{\tau,\tilde{\tau}}^{(x,t)}(x',t')= \bigg(\frac{\tau}{\pi}\bigg)^{\tfrac m2} \sum_{j=1}^n e^{-\tau\langle Z(x,t)- Z(x', t')\rangle^{2}} \LL_j\big( G^{\chi}_{\tilde{\tau}}[u](x',t') \tilde{\chi}(x')\big) \dd t'_j\wedge \dd Z(x',t') 
\end{align*}
and using the following equality
\begin{align*}
\LL_j \big(G^{\chi}_{\tilde{\tau}}[u](x',t')\tilde{\chi}(x')\big)= G^{\LL_j\chi}_{\tilde{\tau}}[u](x',t')\tilde{\chi}(x')+ G^{\chi}_{\tilde{\tau}}[\LL_j u](x',t')\tilde{\chi}(x')+ G^{\chi}_{\tilde{\tau}}[u](x',t')\LL_j\tilde{\chi}(x') 
\end{align*}
together with the convergence stated in \eqref{GdoTraco} we obtain
\begin{align*}
  R^{\chi}_\tau[u](\varphi) = \int_V \int_{[0,t]} \bigg(\frac{\tau}{\pi}\bigg)^{\tfrac m2} \sum_{j=1}^n   (\iota^{\ast}_{t'}u)_{x'}\big( e^{-\tau\langle Z(x,t)- Z(x', t')\rangle^{2}}\LL_j\chi(x') \det Z_x(x',t')\big) \varphi(x,t)\dd t'_j \dd x \wedge \dd t
\end{align*}
since $u$ is a solution of $\mathcal{L}$ and $\LL_j \tilde{\chi}=0 $ over $\supp \chi$,
as we wished to prove.
\end{proof}

Now we can use  \eqref{Fidentity} to conclude the proof of Theorem~~\ref{BTAPPthm},  in $\mathcal{D}_s'$.

\begin{Pro}\label{RtauD'}
 Let $\chi\in G^{s}_c(B^{\R^m}_{R}(0))$, with $\chi=1$ in $B^{\R^m}_{R/2}(0)$ and $u \in \mathcal{D}_s'(W)$ a solution of $\mathcal{L}$. Then $R^{\chi}_\tau[u]$ converges to $0$ in $G^s(U)$ when $\tau \lra \infty$. 
\end{Pro}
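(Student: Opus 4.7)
The strategy is to mirror the proof of Proposition~\ref{RtauFuncao}, working from the integral representation \eqref{Fidentity} and using the continuity of the trace $\iota^*_{t'}u$ established in Appendix~\ref{AppendixB} in place of the pointwise evaluation of a classical solution. First, I would parametrize the chain of integration $[0,t]$ by $t' = rt$, $r\in[0,1]$ (so $\dd t'_j = t_j\,\dd r$), turning \eqref{Fidentity} into
\begin{equation*}
R^{\chi}_\tau[u](x,t) = \Big(\frac{\tau}{\pi}\Big)^{\tfrac m2} \sum_{j=1}^n \int_0^1 (\iota^*_{rt}u)_{x'}\!\big(e^{-\tau\langle Z(x,t)-Z(x',rt)\rangle^{2}}\LL_j\chi(x')\det Z_x(x',rt)\big)\,t_j\,\dd r,
\end{equation*}
so that the integration domain no longer depends on $t$.

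Next, I would compute $\partial_x^{\alpha}\partial_t^{\beta} R^{\chi}_\tau[u]$ by differentiating under the $r$-integral, commuting the $(x,t)$-derivatives inside the ultradistribution action. This is legitimate because these are parameter derivatives, and by Appendix~\ref{AppendixB} the map $t'\mapsto \iota^*_{t'}u$ is a $G^{s}$ family of ultradistributions with estimates uniform over $t'\in\overline{B^{\R^n}_T(0)}$. The Leibniz/chain rule splits each $\partial_t$ into a part hitting the test function (yielding $\partial_x^{\alpha'}\partial_t^{\gamma'}$ of the exponential times $G^{s}$ factors coming from $\LL_j\chi$ and $\det Z_x$) and a part hitting the base point $rt$ of the trace (yielding $t'$-derivatives of the family $\iota^*_{t'}u$ evaluated at $t'=rt$, with $G^{s}$-type bounds provided by the appendix).

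Then I would apply the continuity of $\iota^*_{t'}u$: there exist $h_0>0$ and $C_{h_0}>0$, uniform in $t'\in\overline{B^{\R^n}_T(0)}$, such that $|(\iota^*_{t'}u)(\psi)|\le C_{h_0}\|\psi\|_{h_0,\overline{B^{\R^m}_R(0)}}$ for every $\psi\in G^{s,h_0}_c(\overline{B^{\R^m}_R(0)})$. Since $\LL_j\chi$ is supported in the annulus $A(R/2,R)$, the phase estimate \eqref{est:faseRtau} bounds the exponential pointwise by $e^{-\tau R^{2}/33}$ on the effective support, while Lemma~\ref{lem:usandoFdB} applied to the phase $[Z(x,t)-Z(x',rt)]^{2}$ as in \eqref{est:usandoFdBa} controls the $G^{s,h_0}$-seminorm of the test function by $C h^{|\alpha'|+|\gamma'|+|\delta|}(|\alpha'|+|\gamma'|+|\delta|)!^{s}e^{-\tau R^{2}/33+s\tau^{1/s}}$, where $|\delta|$ counts the $x'$-derivatives entering the seminorm. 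Summing the resulting combinatorial factors exactly as in the derivation of \eqref{interm}, I would obtain
\begin{equation*}
\sup_{(x,t)\in U}\big|\partial_x^{\alpha}\partial_t^{\beta} R^{\chi}_\tau[u](x,t)\big| \le \tilde C\,\tau^{\tfrac m2}\,e^{s\tau^{1/s}-\tau R^{2}/33}\,H^{|\alpha|+|\beta|}(|\alpha|+|\beta|)!^{s},
\end{equation*}
with $\tilde C,H$ independent of $\alpha,\beta,\tau$. For $s>1$ the factor $e^{s\tau^{1/s}-\tau R^{2}/33}$ decays super-polynomially in $\tau$, so $R^{\chi}_\tau[u]\to 0$ in $G^{s,2H}(U)$, hence in $G^{s}(U)$.

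The main obstacle is the third step: extracting from Appendix~\ref{AppendixB} a continuity estimate for $\iota^*_{t'}u$ that is uniform in $t'\in\overline{B^{\R^n}_T(0)}$, together with analogous uniform bounds on the $t'$-derivatives of this family of traces (which appear when $\partial_t$ differentiates the base point $rt$), all within a single seminorm class $G^{s,h_0}$. Once these uniform estimates are in hand, the remainder is a Leibniz/Fa\`a di Bruno bookkeeping exercise strictly analogous to the smooth case treated in Proposition~\ref{RtauFuncao}.
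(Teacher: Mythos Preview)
Your proposal is correct and follows essentially the same route as the paper: the paper starts from \eqref{Fidentity}, parametrizes $t'=rt$, applies the Leibniz rule for the trace family (Proposition~\ref{Leibiniz}) to produce \eqref{eq;der-traco}, bounds the trace contributions via the uniform estimate \eqref{the-end}, and controls the $G^{s}$-seminorm of the test function through Lemma~\ref{lem:usandoFdB} together with \eqref{est:faseRtau}. The ``main obstacle'' you flag---uniform-in-$t'$ continuity of $\iota^*_{t'}u$ and $G^s$ bounds on its $t'$-derivatives in a fixed seminorm class---is exactly what Proposition~\ref{Leibiniz} and inequality \eqref{the-end} supply, so your outline goes through without further difficulty.
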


\begin{proof}
For every $j= 1, \ldots, n$, we define
\begin{align*}
  \varPhi_j(x,t,x',r)= e^{-\tau\langle Z(x,t)- Z(x',  rt)\rangle^{2}}\LL_j\chi(x')\det Z_x(x',r t) t_j.
\end{align*}
Note that there exist $\rho>0$ such that $\varPhi_j\in G^{s,\rho}(U\times B_{R}^{\R^{m}}(0)\times (0,1))$ for every $j=1, \ldots, n$.
Now we differentiate $R^\chi_\tau[ u](x,t)$ using \eqref{Leibiniz} to obtain 
\begin{align}\label{eq;der-traco}
  \del_{x}^{\alpha} \del_t^{\beta}R^{\chi}_{\tau}[u](x,t) = \bigg(\frac{\tau}{\pi}\bigg)^{\tfrac m2} \sum_{j=1}^n  \int_{0}^{1} \sum_{\gamma \leq \beta} {\beta \choose \gamma}  \del_t^{\beta-\gamma} (\iota^{\ast}_{rt}u)_{x'}\big(  \del_x^{\alpha} \del_t^{\gamma} \big\{\varPhi_j(x, t, x',r)\big\}\big) \dd r.  
\end{align}
 Thus we can consider $\del_x^{\alpha} \del_t^{\gamma} \varPhi_j$  as an element of $G^{s, \tilde{\rho}}(B_{R}^{\R^m}(0))$ in $x'\in B_{R}^{\R^m}(0)$ (where $\tilde{\rho}$ could be any number greater than $\rho$, let us take $\tilde{\rho}=2^s\rho$) and we can apply  estimate~\eqref{the-end} from Appendix~\ref{AppA}  to obtain
\begin{align}\label{Traceinequality1}
  \del_t^{\beta-\gamma} (\iota^{\ast}_{st}u)_{x'}\big(  \del_x^{\alpha} \del_{t}^{\gamma}\big\{\varPhi_j(x, t, x',r)\big\}\big)
  &\leq \|\del_x^{\alpha} \del_{t}^{\gamma} \varPhi_j\|_{\tilde{\rho}, B_{R}^{\R^{m}}(0)}C  \tilde{H}^{|\beta-\gamma|} |\beta-\gamma|!^{s}.
\end{align}
Now for every $(x, t) \in U$ and each $r\in (0,1)$ we have 
\begin{align}\label{Traceinequality1'}
  \|\del_x^{\alpha} \del_{t}^{\gamma}\varPhi_j\|_{ \tilde{\rho}, B_{R}^{\R^{m}}(0)}&= \sup_{\theta\in \Z_+^m}  \sup_{x' \in B_{R}^{\R^{m}}(0)} \frac{ \left| \del_{x'}^{\theta}\del_x^{\alpha} \del_{t}^{\gamma} \big\{\varPhi_j(x, t, x',r)\big\}\right|}{ \tilde{\rho}^{|\theta|}|\theta|!^{s}}
  								\notag\\
                                                                     &= \sup_{\theta\in \Z_+^m}  \sup_{x' \in B_{R}^{\R^{m}}(0)} \bigg\{\frac{ \left|\del_{x'}^{\theta}\del_x^{\alpha} \del_{t}^{\gamma} \big\{\varPhi_j(x, t, x',r)\big\}\right|}{ \rho^{|\theta|+|\alpha|+ |\gamma|} (|\theta|+ |\alpha|+|\gamma|)!^{s}} \frac{(|\theta|+ |\alpha|+|\gamma|)!^{s} \rho^{|\alpha|+ |\gamma|}}{2^{s|\theta|}|\theta|!^s}\bigg\}
                                                                     \notag\\
                                                                     &\leq \sup_{\theta\in \Z_+^m}  \sup_{x' \in B_{R}^{\R^{m}}(0)} \bigg\{\frac{\left|\del_{x'}^{\theta}\del_x^{\alpha} \del_{t}^{\gamma} \big\{\varPhi_j(x, t, x',r)\big\}\right|}{ \rho^{|\theta|+|\alpha|+ |\gamma|} (|\theta|+ |\alpha|+|\gamma|)!^{s}}  \bigg\} (|\alpha|+ |\gamma|)!^{s} (2^s\rho)^{|\alpha|+ |\gamma|}
                                                                     \notag\\
  &\leq \|\varPhi_j\|_{\rho,U\times B_{R}^{\R^{m}}(0)\times (0,1)} (|\alpha|+ |\gamma|)!^{s} (2^s\rho)^{|\alpha|+ |\gamma|}.
\end{align}
Let us denote by $\mathcal{U}=U\times B_{R}^{\R^{m}}(0)\times (0,1)$, then using \eqref{eq;der-traco}, \eqref{Traceinequality1} and \eqref{Traceinequality1'} we obtain
\begin{align}\label{Est.RtauDerivative}
  |\del_{x}^{\alpha} \del_t^{\beta}R^{\chi}_{\tau}[u](x,t)|&\leq  \bigg(\frac{\tau}{\pi}\bigg)^{\tfrac m2} \sum_{j=1}^n \sum_{\gamma \leq \beta} {\beta \choose \gamma}  \|\varPhi_j\|_{\rho,\mathcal{U}} (|\alpha|+|\gamma|)!^s (2\rho)^{|\alpha+ \gamma|}C \tilde{H}^{|\beta-\gamma|}|\beta-\gamma|!^{s}
\notag\\[4pt]
  &\leq   C \bigg(\frac{\tau}{\pi}\bigg)^{\tfrac m2}  \Big(\sum_{j=1}^{n} \|\varPhi_j\|_{\rho,\mathcal{U}}\Big) (2^s(2^s\rho+ \tilde{H}))^{|\alpha+ \beta|}   (|\alpha|+|\beta|)!^{s}.
\end{align}
Now let us estimate $\|\varPhi_j\|_{\rho,U\times B_{R}^{\R^{m}}(0)\times (0,1)}$, we have
\begin{align}\label{eq:WritingDerivatives}
  &\del_{x}^{\alpha}\del_t^{\beta} \del_{x'}^{\gamma} \del_{s}^{\sigma}\varPhi_j(x,t,x',r)=
\\[4pt]
  &=\sum_{S_{\beta, \gamma, \sigma}} {\beta\choose \beta'} { \gamma \choose \gamma'} {\sigma \choose \sigma'} \del_{x}^{\alpha}\del_t^{\beta'} \del_{x'}^{\gamma'} \del_{r}^{\sigma'} e^{-\tau\langle Z(x,t)- Z(x',  rt)\rangle^{2}} \del_t^{\beta''} \del_{x'}^{\gamma''} \del_{r}^{\sigma''}\Lambda(t,x', r),
\notag
\end{align}
where $S_{\beta, \gamma, \sigma}= \{(\beta', \beta'', \gamma', \gamma'', \sigma', \sigma''): \beta'+ \beta''= \beta; \gamma'+ \gamma''= \gamma; \sigma'+ \sigma''=\sigma\}$ and
\begin{align*}
  \Lambda(t,x', r)= \big(\LL_j\chi(x')\det Z_x(x',r t) t_j\big).
\end{align*}
Using Lemma~\ref{lem:usandoFdB}
with  $f(x,t,x',r)= \langle Z(x,t)-Z(x',rt)\rangle^2$ we see that there are constants $C'>0$ and $h>0$ such that
\begin{align}\label{expinequalityR}
\frac{\left| \del_{x}^{\alpha}\del_t^{\beta'} \del_{x'}^{\gamma'} \del_{r}^{\sigma'} e^{-\tau\langle Z(x,t)- Z(x',  rt)\rangle^{2}}\right|}{ h^{|\alpha|+|\beta'|+|\gamma'|+|\sigma'|} (|\alpha|+|\beta'|+|\gamma'|+|\sigma'|)!^{s} }
&\leq C' \,e^{-\tau\Re \langle Z(x,t)-Z(y,rt) \rangle^2+ s \tau^{1/s}}.
\end{align}
Also, there is a constant $\tilde{h}>0$ such that $\Lambda \in G^{s, \tilde{h}}(B_{R}^{\R^{n}}(0)\times B_{R}^{\R^{m}}(0)\times (0,1))$ and so there is $\tilde{C}>0$ such that
\begin{align}\label{inequalityLambdaR}
  |\del_t^{\beta''} \del_{x'}^{\gamma''} \del_{r}^{\sigma''}\Lambda(t,x', r)|\leq \tilde{C} \tilde{h}^{|\beta''|+|\gamma''|+ |\sigma''|}(|\beta''|+|\gamma''|+ |\sigma''|)!^{s}.
\end{align}
Consequently, using \eqref{eq:WritingDerivatives}, \eqref{expinequalityR}, \eqref{inequalityLambdaR} and \eqref{est:faseRtau}, we obtain
\begin{align}\label{est.Der}
  \frac{ |\del_{x}^{\alpha}\del_t^{\beta} \del_{x'}^{\gamma} \del_{r}^{\sigma}\varPhi_j(x,t,x',r)|}{\big[2\big( h+ \tilde{h}\big)\big]^{|\alpha|+|\beta|+|\gamma|+|\sigma|}  (|\alpha|+|\beta|+|\gamma|+|\sigma|)!^{s} }  &\leq  C'\tilde{C} e^{-\tau\Re \langle Z(x,t)-Z(y,rt) \rangle^2+ s \tau^{1/s}}\notag \\
 & \leq  C'\tilde{C} e^{-\tau R/33+ s \tau^{1/s}}.
\end{align}
Therefore we can take $\rho= 2(h+\tilde{h})$, it follows from \eqref{Est.RtauDerivative} and \eqref{est.Der} that
\begin{align*}
 \frac{ |\del_{x}^{\alpha} \del_t^{\beta}R^{\chi}_{\tau}[u](x,t)|}{(2(2\rho+ \tilde{H}))^{|\alpha|+ |\beta|}   (|\alpha|+|\beta|)!^{s}}  &\leq  \frac{C C'\tilde{C}}{\pi^{\frac{m}{2}}} \tau^{\frac{m}{2}} e^{-\tau R/33+ s \tau^{1/s}}.
\end{align*}
Proving that $R_{\tau}^{\chi}[u]$ converges to $0$ in $G^{s}(U)$ as desired.
\end{proof}

\section{Approximate Poincaré Lemma \label{AAPL}}

 Let $\Omega$ be an open neighborhood of the origin in $\RN$ and assume that we have a locally integrable structure in $\Omega$ where the orthogonal $\mathcal{L}^{\perp}$ is defined globally in $\Omega$ by the differential of $Z_1,\ldots, Z_m$ and denote $\lambda(x, t)= (Z_1(x,t), \ldots, Z_m(x,t),t_1, \ldots, t_n)$.
We define $G^{s}(\Omega, \Lambda^{p,q})$ the space of all $(p,q)$-forms
\begin{align}\label{Expressaopraf}
  f(x,t)= \sum_{|I|=p}\sum_{|J|=q} f_{IJ}(x,t) \dd Z_I\wedge \dd t_J,
\end{align}
where the coefficients $f_{IJ}\in G^{s}(\Omega)$, where $\dd Z_I = \dd Z_{i_1} \wedge \ldots \wedge \dd Z_{i_p}$ and $\dd t_J= \dd t_{j_1}  \wedge \ldots \wedge \dd t_{j_q}$ for $I=\{1\leq i_1< \cdots< i_p\leq m\}$ and $J=\{1\leq j_1< \cdots< j_q\leq n\}$.  The notation $K=\{ 1\leq k_1< \cdots< k_s\leq r\}$ means that $K= \{k_1, \ldots, k_s\}\subset \{1, \ldots, r\}$ and that $k_1< \cdots< k_s$.

Let us define a linear differential operator  $\mathbb{L}: G^{s}(\Omega, \Lambda^{p,q}) \lra G^{s}(\Omega, \Lambda^{p,q+1})$ by
\begin{align*}
  \mathbb{L}f   &= \sum_{|I|=p}\sum_{|J|=q} \sum_{j=1}^{n} \LL_j f_{IJ} \dd t_j\wedge\dd Z_I\wedge\dd t_J, 
\end{align*}
for every $ f\in G^{s}(\Omega, \Lambda^{p,q}).$ The Gevrey local solvability of $\mathbb{L}$ in degree $(p,q)$ at a point $p_0\in \Omega$ here means that there is $\Omega_0$ a neighborhood of $p_0$ such that for any other neighborhood $\Omega_1$ of $p_0$ with $\Omega_1\subset \Omega_0$, we can find a neighborhood $\Omega_2$ of $p_0$ such that $\Omega_2\subset \Omega_1$ and for any $f\in G^{s}(\Omega_1,\Lambda^{p,q})$ such that $\mathbb{L}f=0$ there is $g \in G^{s}(\Omega_2, \Lambda^{p,q-1})$ such that $\mathbb{L}g= f$ in $\Omega_2$.

We recall that the approximate Poincaré lemma is a result concerning approximate solvability of $\mathbb{L}$. Before we actually enunciate and prove this result let us fix more notation and recall an important trick. 
Let $J=\{1\leq j_1< \cdots< j_q\leq n\}$ and $j\in \{1, \ldots, n\}\setminus J$ and we define $\epsilon(j, J)$ to be the sign of permutations to ordenate the $q+1$-form $\dd t_j \wedge \dd t_J$, i.e, $\epsilon(j, J)$ is $1$ if the number of permutation is even and $-1$ if this number is odd.  

Assume that $q\geq 2$ and define, for any $J$ with $|J|=q$, the $q-1$-form
\begin{align*}
  \omega_J= \sum_{j\in J} \epsilon(j, J\setminus \{j\}) t_j \dd t_{J\setminus\{{j}\}},
\end{align*}
and, when $q=1$, $\omega_J= t_J$.

Now we follow \cite{treves_has}, and, for any $q$-form
\begin{align*}
  F= \sum_{|J|=q} F_J \dd t_J,
\end{align*}
we define an operator for $q$-forms to $q-1$-forms
\begin{align*}
  K^{(q)}F= \sum_{|J|=q} \Big\{\int_0^{1} F_J(\sigma t)\sigma^{q-1}\dd \sigma\Big\} \omega_J.
\end{align*}
This operator satisfies the following formula:
\begin{align}\label{formulamagica}
  F= \dd_t K^{q}F+ K^{(q+1)} \dd_t F.
\end{align}

Assume that $W$ and $V$ are as in the Baouendi-Treves approximation formula, i.e., $V= B_{R}^{\Rm}(0)\times B_{R}^{\Rn}(0)$, $V\subset \subset W\subset \subset\Omega$ and such that \eqref{eq z4} holds and let $U= B_{R/2}^{\Rm}(0)\times B_{R}^{\Rn}(0)$.  For every  $\chi\in G^{s}_c(B_R^{\R^{m}}(0))$ and $g\in G^s(\Omega)$ we define
\begin{equation}\label{NovaeqpraG}
  \mathcal{G}_\tau^{\chi}[g](z,t): = \Big(\frac\tau{\pi}\Big)^{\tfrac m2}\!\!  \int  e^{-\tau \langle z-Z(x',t)\rangle^2} \chi(x')g(x', t) \, {\rm det} Z_x(x',t) \dd x'.
\end{equation}
 Note that $\mathcal{G}_{\tau}^{\chi}[g](Z(x,t), t)= G^{\chi}_\tau[g](x,t)$. If $f$ is a $(p,q)$-form as in \eqref{Expressaopraf} we define
\begin{align}\label{ExpressaopraGf}
  \mathcal{G}^{\chi}_\tau [f](z,t)= \sum_{|I|=p}\sum_{|J|=q}  \mathcal{G}_\tau^{\chi}[f_{IJ}](z,t)\dd z_I\wedge \dd t_J,
\end{align}
then $\lambda^{\ast}(\mathcal{G}^{\chi}_\tau [f])(x,t)= G^{\chi}_\tau [f](x,t)$ where $G^{\chi}_\tau [f]$ is defined by allowing $G^{\chi}_\tau$ acts coefficientwise. 

We now can define
\begin{align*}
  \mathcal{K}_\tau^{(p,q)} [f,\chi](z,t)&= (-1)^{p} K^{(q)} \mathcal{G}^{\chi}_\tau[f](z,t)\\
  &= (-1)^{p}\sum_{|I|=p}  \sum_{|J|=q} \Big\{ \int_{0}^{1}\mathcal{G}_\tau^{\chi}[f_{IJ}](z,\sigma t)\sigma^{q-1} \dd \sigma\Big\} \dd z_I\wedge \omega_J
\end{align*}
It follows from \eqref{formulamagica} that we have
\begin{align}\label{segundaformulamagica}
  \mathcal{G}^{\chi}_\tau[f]=  (-1)^{p} \dd_{t}[ K^{(q)} \mathcal{G}_\tau^{\chi} [f]]+ (-1)^{p}   K^{(q+1)} \dd_t \mathcal{G}^{\chi}_\tau[f].
\end{align}

\begin{Theo}
  Assume $0\leq p\leq m, 1\leq q\leq n.$ There are open neighborhoods of the origin, $W$ and $U$ as above, such that given any $f\in G^{s}(\overline{W}; \Lambda^{p,q})$ that is $\mathbb{L}$-closed and any $\chi\in G^{s}_c(B_{R}^{\R^{m}}(0))$ that is equal to $1$ in $B_{R/2}^{\R^{m}}(0)$ we have
  \begin{align*}
    f= \lim_{\tau \lra \infty} \mathbb{L} \big(\lambda^{\ast}\mathcal{K}_\tau^{(p,q)} [f,\chi]\big)
  \end{align*}
  in $G^{s}(U,\Lambda^{p,q})$.
\end{Theo}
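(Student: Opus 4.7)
The plan is to apply $\lambda^{*}$ to the algebraic identity~\eqref{segundaformulamagica} and combine it with the Gevrey Baouendi--Treves convergence of Section~\ref{DBTG}. The central preliminary observation is the commutation
\begin{align*}
\mathbb{L}\circ\lambda^{*}=\lambda^{*}\circ\dd_{t}
\end{align*}
on forms of type $\alpha=g(z,t)\,\dd z_{I}\wedge\dd t_{J}$: by the chain rule $\LL_{j}[g(Z(x,t),t)]=\sum_{k}\partial_{z_{k}}g\cdot\LL_{j}Z_{k}+\partial_{t_{j}}g=\partial_{t_{j}}g(Z(x,t),t)$, since $\LL_{j}Z_{k}=0$ and $\LL_{j}t_{j'}=\delta_{jj'}$; hence the $\dd t_{j}$-component of $\dd(\lambda^{*}\alpha)$ coincides with $\lambda^{*}(\dd_{t}\alpha)$. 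Combined with the coefficientwise identity $\lambda^{*}\mathcal{G}_{\tau}^{\chi}[f]=G_{\tau}^{\chi}[f]$ recorded right after~\eqref{ExpressaopraGf}, applying $\lambda^{*}$ to~\eqref{segundaformulamagica} yields
\begin{align*}
G_{\tau}^{\chi}[f]=\mathbb{L}\bigl(\lambda^{*}\mathcal{K}_{\tau}^{(p,q)}[f,\chi]\bigr)+(-1)^{p}\lambda^{*}\bigl(K^{(q+1)}\dd_{t}\mathcal{G}_{\tau}^{\chi}[f]\bigr).
\end{align*}

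Since $\chi\equiv 1$ on $B_{R/2}^{\R^{m}}(0)$, which contains the $x$-projection of $U$, Proposition~\ref{GtauFuncao} applied to each coefficient $f_{IJ}$ gives $G_{\tau}^{\chi}[f]\to\chi f=f$ in $G^{s}(U,\Lambda^{p,q})$ as $\tau\to\infty$. The theorem therefore reduces to proving that the error term
\begin{align*}
E_{\tau}:=\lambda^{*}\bigl(K^{(q+1)}\dd_{t}\mathcal{G}_{\tau}^{\chi}[f]\bigr)
\end{align*}
tends to $0$ in $G^{s}(U,\Lambda^{p,q})$.

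To handle $E_{\tau}$ I would exploit the $\mathbb{L}$-closedness of $f$ to produce a cancellation inside $\dd_{t}\mathcal{G}_{\tau}^{\chi}[f]$. Writing $\LL_{j}=\partial_{t_{j}}+\sum_{k}c_{jk}(x',s)\partial_{x'_{k}}$ (the decomposition forced by $\LL_{j}Z_{k}=0$ and $\LL_{j}t_{j'}=\delta_{jj'}$), substituting $\partial_{s_{j}}=\LL_{j}-\sum_{k}c_{jk}\partial_{x'_{k}}$ inside the integral defining $\partial_{s_{j}}\mathcal{G}_{\tau}^{\chi}[f_{IJ}](z,s)$, then integrating by parts in $x'$ (no boundary contribution since $\chi\in G^{s}_{c}$) and using that $\LL_{j}$ annihilates the phase $\langle z-Z(x',s)\rangle^{2}$, a Leibniz expansion of $\LL_{j}[\chi f_{IJ}\det Z_{x}]$ splits $\partial_{s_{j}}\mathcal{G}_{\tau}^{\chi}[f_{IJ}](z,s)$ into three kinds of contributions: those with $\LL_{j}f_{IJ}$, those with $\LL_{j}\chi$, and those with $\LL_{j}$ applied to $\det Z_{x}$ (plus explicit integration-by-parts corrections). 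Assembling the $\dd t_{K}$-coefficient of $\dd_{t}\mathcal{G}_{\tau}^{\chi}[f]$ by the alternating sum over pairs $(j,J)$ with $\{j\}\cup J=K$, the hypothesis $\mathbb{L}f=0$ cancels the first and third kinds, leaving only the $\LL_{j}\chi$-terms. Since $\supp(\LL_{j}\chi)\subset\{R/2\leq|x'|\leq R\}$, an analogue of~\eqref{est:faseRtau} yields $|e^{-\tau\langle Z(x,t)-Z(x',\sigma t)\rangle^{2}}|\leq e^{-c\tau}$ uniformly on $U\times\supp(\LL_{j}\chi)\times[0,1]$ for some $c>0$. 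Coupling this exponential decay with the Fa\`a di Bruno-type Gevrey bounds of Lemma~\ref{lem:usandoFdB} and observing that $K^{(q+1)}$ is a plain integration in $\sigma\in[0,1]$ with polynomial weight, a direct adaptation of the proof of Proposition~\ref{RtauFuncao} delivers an estimate of the form
\begin{align*}
\sup_{(x,t)\in U}\frac{|\partial_{x}^{\alpha}\partial_{t}^{\beta}E_{\tau}(x,t)|}{H^{|\alpha|+|\beta|}\,(|\alpha|+|\beta|)!^{s}}\leq C\,\tau^{m/2}e^{-c\tau+s\tau^{1/s}},
\end{align*}
for suitable positive constants $H,C,c$, proving $E_{\tau}\to 0$ in $G^{s}(U,\Lambda^{p,q})$.

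The main obstacle is the last step: carrying out cleanly the cancellation coming from $\mathbb{L}f=0$ across the alternating signs $\epsilon(j,J\setminus\{j\})$ implicit in the $\omega_{K}$ basis of $K^{(q+1)}$, and propagating the Gevrey bounds through the ``off-diagonal'' evaluation of the integrand at $(Z(x,t),\sigma t)$ (rather than at $(Z(x,\sigma t),\sigma t)$, where the Leibniz-type identities \eqref{LeibnizM}--\eqref{LeibnizL} would apply directly), uniformly in $\sigma\in[0,1]$.
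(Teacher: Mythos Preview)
Your approach is correct and essentially the same as the paper's. The one place where the paper is cleaner is the computation of $\dd_{t}\mathcal{G}_{\tau}^{\chi}[f]$: instead of splitting into your ``three kinds'' of contributions, the paper invokes directly the Leibniz-type identity
\[
\partial_{t_{j}}\mathcal{G}_{\tau}^{\chi}[g](z,t)=\mathcal{G}_{\tau}^{\LL_{j}\chi}[g](z,t)+\mathcal{G}_{\tau}^{\chi}[\LL_{j}g](z,t),
\]
valid for $z$ independent of $t$ (this is the $\mathcal{G}$-analogue of \eqref{LeibnizL}, essentially Lemma~II.1.6 in \cite{bch_iis}). Your ``third kind'' --- the $\LL_{j}\det Z_{x}$ term together with the integration-by-parts correction --- cancels \emph{automatically} as part of establishing this identity, not because $\mathbb{L}f=0$; only the $\LL_{j}f_{IJ}$ terms are killed by the closedness hypothesis, via the alternating sum. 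This also dissolves your stated ``off-diagonal'' obstacle: since the identity holds with $z$ a free variable, you may substitute $z=Z(x,t)$ after the fact while the integrand lives at time $\sigma t$, and the Gevrey estimate on the surviving $\LL_{j}\chi$-terms is then literally the computation in Proposition~\ref{RtauFuncao}.
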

\begin{proof}
  From \eqref{segundaformulamagica}, it follows that it is enough to prove that $ K^{(q+1)} \dd_t \mathcal{G}^{\chi}_\tau[f]$ converges to $0$ in $G^{s}(V,\Lambda^{p,q})$, note that $\mathbb{L} \lambda^{\ast}= \lambda^{\ast} \dd_t$. Now we use that
  \begin{align*}
  \dd_t \mathcal{G}_\tau^{\chi}[f](z,t)= \sum_{|I|=p}\sum_{|J|=q} \sum_{j=1}^{n}\Big( \mathcal{G}_\tau^{\LL_j\chi}[f_{IJ}](z,t)+\mathcal{G}_\tau^{\chi}[\LL_jf_{IJ}](z,t)\Big) \dd t_j\wedge \dd z_I\wedge \dd t_J.    
  \end{align*}
  Since  $f$ is $\mathbb{L}$-closed it follows, for every $|I|=p$ and every $|K|=q+1$, that 
  \begin{align*}
  \Big(\frac\tau{\pi}\Big)^{\tfrac m2}\!\!  \int  e^{-\tau \langle z-Z(x',t)\rangle^2} \chi(x')\sum_{\substack{K=J\cup\{j\}\\|J|=q}}  \epsilon(j,J) \LL_j f_{IJ}(x', t) \, {\rm det} Z_x(x',t) \dd x'= 0.
  \end{align*}
  Consequently,
  \begin{align*}
  \sum_{|I|=p}\sum_{|J|=q} \sum_{j=1}^{n}\mathcal{G}_\tau^{\chi}[\LL_jf_{IJ}](z,t) \dd t_j\wedge \dd z_I\wedge \dd t_J=0.
  \end{align*}

  Therefore all we need to show is that
  \begin{align*}
    \sum_{|I|=p}\sum_{|J|=q} \sum_{j=1}^{n}\lambda^{\ast}K^{(q+1)} \Big(\mathcal{G}_\tau^{\LL_j\chi}[f_{IJ}] \dd t_j\wedge \dd z_I\wedge \dd t_J\Big)(x,t) \lra 0 \textrm{ in } G^s(U; \Lambda^{p,q}).
  \end{align*}
Since
  \begin{align}\label{ultimaequacao}
   \lambda^{\ast}K^{(q+1)} \Big(\mathcal{G}_\tau^{\LL_j\chi}[f_{IJ}] \dd t_j\wedge \dd z_I\wedge \dd t_J\Big)(x,t)= (-1)^{p}\Big( \int_{0}^{1}\mathcal{G}_\tau^{\LL_j\chi}[f_{IJ}](Z(x,t),\sigma t)\sigma^{q} \dd \sigma\Big) \dd Z_I\wedge \omega_J
  \end{align}
one can  we use  the same argument to prove that $R^{\chi}_\tau[ u](x,t)$ given by \eqref{recall:Rtau} converges to $0$ in $G^{s}(U)$ to conclude that coefficient of the form in the left hand side of \eqref{ultimaequacao} converges to $0$ in $G^{s}(U)$. 
\end{proof}

\section{Ultradistributions vanishing on maximally real submanifolds \label{AUVOMRS}}

Let $\Omega$ an open subset of $\RN$ and  $\mathcal{L}$ be a locally integrable structure of corank $m$. Let $\Sigma\subset \Omega$ be an embedded Gevrey submanifold of dimension $m$, i.e., the defining functions of  $\Sigma$  are Gevrey functions. We recall that $\Sigma$ is  maximally real with respect to $\mathcal{L}$ if for every $p\in \Sigma$, any nonvanishing section of $\mathcal{L}$ defined in a neighborhood of $p$ is transversal to $\Sigma$ at $p.$

\begin{Theo}
  Let $\Sigma$ be an embedded submanifold in $\Omega$  maximally real with respect to $\mathcal{L}$. If $u\in \mathcal{D}'_s(\Omega)$ is a solution of $\mathcal{L}$ and $u|_{\Sigma}=0$, then $u$ vanishes in a neighborhood of $\Sigma.$
\end{Theo}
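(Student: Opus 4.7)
The plan is to localize around an arbitrary point $p\in\Sigma$, reduce to the standard setup of Subsection~\ref{ap2}, and then exploit the fact that the kernel $E_\tau^{\chi}[u]$ vanishes identically. First I would use the Gevrey analogue of the classical straightening theorem for maximally real submanifolds to find, around $p$, $G^s$ coordinates $(x,t)\in B_R^{\R^m}(0)\times B_R^{\R^n}(0)$ in which $\mathcal{L}$ takes the canonical form \eqref{eq z1}--\eqref{eq z2}, with $p$ corresponding to the origin and $\Sigma$ corresponding to the slice $\{t=0\}$. The maximally real hypothesis is used precisely here: the condition that every nonvanishing section of $\mathcal{L}$ is transversal to $\Sigma$ at $p$ is what allows the $n$ Gevrey defining functions of $\Sigma$ to be completed into such a coordinate system.

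Next I would interpret the vanishing hypothesis $u|_{\Sigma}=0$ in terms of the trace $\iota_0^{\ast}u$ defined by \eqref{TRACE}. Since $u$ is a solution of $\mathcal{L}$, the results of Appendix~\ref{AppendixB} guarantee that $\iota_t^{\ast}u\in\mathcal{D}_s'(B_R^{\R^m}(0))$ is well-defined for every $t\in B_R^{\R^n}(0)$, and the assumption $u|_{\Sigma}=0$ translates into $\iota_0^{\ast}u=0$. Plugging this identity into formula \eqref{EtauD'} yields $E_\tau^{\chi}[u]\equiv 0$ for every $\tau>0$ and every cutoff $\chi\in G^{s}_c(B_R^{\R^m}(0))$.

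Finally, I would pick $\chi\in G^{s}_c(B_R^{\R^m}(0))$ equal to $1$ on $B_{R/2}^{\R^m}(0)$ and work on the open set $U=B_{R/4}^{\R^m}(0)\times B_T^{\R^n}(0)$ fixed in the statement of Theorem~\ref{BTAPPthm}. From \eqref{eq Rtau} and the previous paragraph one has
\[
G_\tau^{\chi}[u]=E_\tau^{\chi}[u]+R_\tau^{\chi}[u]=R_\tau^{\chi}[u]\qquad\text{in }\mathcal{D}_s'(U).
\]
By Remark~\ref{ConvGtauD'} the left-hand side converges to $\chi u=u$ in $\mathcal{D}_s'(U)$ as $\tau\to\infty$, whereas by Proposition~\ref{RtauD'} the right-hand side converges to $0$ in $G^{s}(U)$, hence in $\mathcal{D}_s'(U)$. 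Therefore $u\equiv 0$ on $U$, and since $p\in\Sigma$ was arbitrary, $u$ vanishes in a neighborhood of $\Sigma$.

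The hard part of the argument is really the preliminary step: producing a single $G^s$ chart that simultaneously straightens $\Sigma$ into $\{t=0\}$ and keeps $\mathcal{L}$ in the canonical form \eqref{eq z1}. Once this has been carried out, the proof reduces to inserting the identity $E_\tau^{\chi}[u]\equiv 0$ into the two convergence results for $G_\tau^{\chi}$ and $R_\tau^{\chi}$ already established in Subsection~\ref{ap2}.
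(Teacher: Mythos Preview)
Your proposal is correct and follows essentially the same line as the paper: localize, straighten $\Sigma$ into $\{t=0\}$ via the Gevrey chart (the paper cites \cite{EaGr03} for this step), read $u|_\Sigma=0$ as $\iota_0^{\ast}u=0$ so that $E_\tau^{\chi}[u]\equiv 0$, and conclude $u=0$ on $U$. The only cosmetic difference is that the paper invokes Theorem~\ref{BTAPPthm} directly to say $u=\lim_\tau E_\tau^{\chi}[u]$, whereas you unpack that limit into the two separate convergences $G_\tau^{\chi}[u]\to u$ and $R_\tau^{\chi}[u]\to 0$; the content is identical.
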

\begin{proof}
  It is enough to prove that every  $p\in \Sigma$ has a neighborhood where $u$ vanishes. Fix $p\in \Sigma$ so that we can find local coordinates $(x,t)$ centered at $p$ and $Z_1, \ldots, Z_m$ such that properties \eqref{eq z1} and \eqref{eq z2} hold and $\Sigma=\{(x, 0)\}$ in  a neighborhood of $p$ as proved in \cite{EaGr03}. 
 
Now thanks to the Baouendi-Treves approximation formula there is $U$ a neighborhood of $p$ where $u$ is the limit of 
  \begin{align*}
  E_{\tau}^{\chi}[u](x,t)= \bigg(\frac{\tau}{\pi}\bigg)^{\tfrac m2}(\iota_0^{\ast}u)_{x'}\big( e^{-\tau\langle Z(x,t)- Z(x', 0)\rangle^{2}}  \chi(x') \det Z_x(x',0) \big).
\end{align*}

Since $\Sigma= \{(x,0)\}$, $u|_{\Sigma}=0$ means that $\iota_{0}^{\ast} u=u|_{\Sigma}=0$. Therefore, $  E_{\tau}^{\chi}[u](x,t)$ vanishes in a neighborhood of $p$ and so does $u$.
\end{proof}

\section{Baouendi-Treves theorem in Denjoy-Carleman classes}\label{BTDC}
One can use the ideas of Section~\ref{ap} to  proof of the Baouendi-Treves approximation theorem for more general classes of ultradifferentiable functions. 
More precisely, consider 
the strongly non-quasianalytic Denjoy-Carleman classes of  Roumieu type associated with a non-decreasing sequence of positive numbers $(M_p)_{p\in \Z_+}$ satisfying:
\begin{itemize}  
\item  \underline{\textit{Initial condition:}},
 \begin{equation} \label{P1} 
M_0=M_1=1.
\end{equation}
\item \underline{\textit{Strong logarithmic convexity:}} 
\begin{equation} \label{logconvex}
\frac{M_j}{M_{j-1}} \leq  \frac{M_{j+1}}{M_j} , \qquad j=1,2,3,\ldots.
\end{equation}
\item \underline{\it{Stability under ultradifferential operators:}} There exist $A, \,H>0$ such that 
\begin{equation}\label{M2'} 
M_{j+k}\leq A H^{j+k} M_j M_k, \qquad\forall\, j,k \in \Z_+.
\end{equation} 
\item \underline{\it{strong Non-quasianalyticity condition:}} there exist a constant $A>0$ such that 
\begin{equation}\label{2}
\sum_{j=p+1}^\infty \frac{M_{j-1}}{M_j}<A p\frac{M_p}{M_{p+1}}, \qquad p=1,2,3,\ldots
\end{equation}
\end{itemize}
We refer to \cite{Ko} for more details about these classes. 
The techniques used strongly the fact that $G^s(\overline{V})$ and $G^s(\overline{V}; \MM, \LL)$ are isomorphic as topological spaces. This result can be adapted to strongly non-quasianalytic Denjoy-Carleman classes.
With this equality of topological spaces proved it is not difficult to see that with minor changes in our proof the Baouendi-Treves approximation theorem also holds for these spaces of strongly non-quasianalytic Denjoy-Carleman functions and ultradistributions.

\appendix 

\section{Faà di Bruno formula}\label{AppA}

Next we recall the Faà di Bruno generalized formula.

\begin{Theo}[\cite{bierstonemilman}]\label{denerlzdibruno} 
Let $\Omega\subset \R^{p}$ and $U\subset \R^{n}$ open subsets. Let $f\in \Cinf(\Omega)$ and $g\in \Cinf(U; \R^{p})$ such that $g(U)\subset \Omega$ and denote by $h$ the composition $f\circ g$.
For all $\alpha \in \Z_+^{n}\setminus\{0\}$, we have that
\begin{align*}
  \del^{\alpha} h(x)= \sum_{\mathcal{S_{\alpha}}}   \del^{\kappa} f(g(x))\alpha! \frac{ \big(\del^{\delta_1} g(x)\big)^{\beta_1}}{\beta_1! \delta_1!^{|\beta_1|}} \cdots \frac{ \big(\del^{\delta_\ell} g(x)\big)^{\beta_\ell}}{\beta_\ell! \delta_\ell!^{|\beta_\ell|}},
\end{align*}
where  $\kappa= \beta_1 + \cdots +\beta_\ell$ and $\mathcal{S}_\alpha$ is the set of all $\{\delta_1, \ldots, \delta_\ell\}$ distinct elements of $\big( \Z_+^{n}\setminus\{0\}\big)^{\ell}$ and all $(\beta_1, \ldots, \beta_\ell)\in \big(\Z^{p}_+\setminus\{0\}\big)^{\ell}$, $\ell =1, 2, 3, \ldots, $ such that
\begin{align*}
  \alpha= \sum_{j=1}^{\ell} |\beta_j|\delta_j.
\end{align*}
\end{Theo}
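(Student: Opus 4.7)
The cleanest route is via formal Taylor expansion: I would compute both sides of the claimed identity as coefficients of $y^\alpha$ in the Taylor expansion of $h(x_0+y)=f(g(x_0+y))$ at an arbitrary point $x_0\in U$. Since only derivatives of order at most $|\alpha|$ appear in the formula, it suffices to work with the Taylor polynomials of $f$ and $g$ of degree $|\alpha|$ and invoke Taylor's theorem with remainder to upgrade the identity of polynomial coefficients to the desired identity of derivatives of smooth functions.

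First, set $w(y):=g(x_0+y)-g(x_0)=\sum_{\delta\in\Z_+^n\setminus\{0\}}\frac{y^\delta}{\delta!}\,\del^\delta g(x_0)+o(|y|^{|\alpha|})$, and expand $f$ at $g(x_0)$, obtaining
\[
h(x_0+y)=\sum_{\kappa\in\Z_+^p}\frac{\del^\kappa f(g(x_0))}{\kappa!}\,w(y)^\kappa+o(|y|^{|\alpha|}).
\]
Then expand each $w(y)^\kappa=\prod_{k=1}^p w_k(y)^{\kappa_k}$ by writing each factor $w_k^{\kappa_k}$ as a sum over ordered tuples $(\delta_{k,1},\ldots,\delta_{k,\kappa_k})$ of multi-indices in $\Z_+^n\setminus\{0\}$, producing monomials $\prod_{k,i}\delta_{k,i}!^{-1}\,y^{\delta_{k,i}}\,\del^{\delta_{k,i}}g_k(x_0)$. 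Only configurations with $\sum_{k,i}\delta_{k,i}=\alpha$ contribute to the coefficient of $y^\alpha$.

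Next, group these ordered assignments by the set $\{\delta_1,\ldots,\delta_\ell\}$ of distinct multi-indices that appear, letting $\beta_j=(\beta_{j,1},\ldots,\beta_{j,p})\in\Z_+^p\setminus\{0\}$ record, for each $k$, the multiplicity of $\delta_j$ among $\delta_{k,1},\ldots,\delta_{k,\kappa_k}$. The constraints become $\sum_j|\beta_j|\delta_j=\alpha$ and $\sum_j\beta_j=\kappa$, and the number of ordered assignments $\{\delta_{k,i}\}$ realizing a fixed unordered datum $\{(\delta_j,\beta_j)\}_{j=1}^\ell$ is the multinomial $\prod_{k=1}^p\kappa_k!/\prod_j\beta_{j,k}!=\kappa!/\prod_j\beta_j!$, since $\prod_k\prod_j\beta_{j,k}!=\prod_j\beta_j!$. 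Collecting terms, the coefficient of $y^\alpha$ in $h(x_0+y)$ is
\[
\sum_{\mathcal{S}_\alpha}\frac{\del^\kappa f(g(x_0))}{\prod_j\beta_j!}\,\prod_{j=1}^\ell\frac{(\del^{\delta_j}g(x_0))^{\beta_j}}{\delta_j!^{|\beta_j|}},
\]
and using the identity $\del^\alpha h(x_0)=\alpha!\,[y^\alpha]\,h(x_0+y)$ yields exactly the claimed formula.

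The main obstacle is the combinatorial bookkeeping, and in particular clarifying the convention on the index set $\mathcal{S}_\alpha$: one must verify that the sum is indexed by \emph{unordered} collections $\{(\delta_j,\beta_j)\}_{j=1}^\ell$ with pairwise distinct $\delta_j$, since summing over ordered tuples would introduce a spurious factor of $\ell!$. An alternative, more pedestrian route is induction on $|\alpha|$ starting from the one-variable chain rule $\del_i h=\sum_k(\del_k f\circ g)\,\del_i g_k$ and iterating via Leibniz, but the underlying counting is identical to the one above.
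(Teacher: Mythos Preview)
The paper does not supply its own proof of this theorem: it is stated in the appendix purely as a recalled result, with the proof outsourced to the reference~\cite{bierstonemilman}. So there is no ``paper's proof'' to compare against.

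Your argument via formal Taylor expansion is correct and is in fact the standard way this identity is derived (it is essentially how Bierstone--Milman themselves obtain it). The one place that requires genuine care---and you flagged it yourself---is the combinatorics of passing from ordered tuples $(\delta_{k,i})$ to the unordered data $\{(\delta_j,\beta_j)\}$. Your multinomial count $\kappa!/\prod_j\beta_j!$ is right, and your observation that $\mathcal{S}_\alpha$ must be read as a sum over \emph{unordered} collections with pairwise distinct $\delta_j$ (equivalently, over ordered tuples with the $\delta_j$ listed in some fixed linear order on $\Z_+^n\setminus\{0\}$) is exactly the convention used in \cite{bierstonemilman}; the paper's phrasing of $\mathcal{S}_\alpha$ is admittedly ambiguous on this point, but the formula is only correct under that reading.
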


 We will also need the following result from \cite{bierstonemilman}: 
\begin{Lem}\label{lem:fromBM}
Given $\alpha\in \Z_+^{n}\setminus\{0\}$ and $p \in \Z_+\setminus\{0\}$ let $\mathcal{S}_{\alpha}$ the set defined in the Theorem~\ref{denerlzdibruno}. For every $(\beta_1, \dots , \beta_{\ell};\delta_1, \dots , \delta_\ell) \in \mathcal{S}_\alpha$, we have
\begin{equation}\label{sk=r}
|\kappa|!^{t}\, |\delta_1|!^{t \beta_1} \ldots |\delta_\ell|!^{t\beta_\ell} \leq |\alpha|!^{t}
\end{equation}
for every $t>0$ and, for every positive constant $A$, there are constants $L,D>0$, depending only on $A$, $n$ and $p$, such that
\begin{equation}
  \sum_{\mathcal{S}_\alpha} \frac{\kappa!}{\beta_1! \dots \beta_\ell! } A^{|\kappa|} \le LD^{|\alpha|}.
\end{equation}
\end{Lem}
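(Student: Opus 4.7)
The first inequality~\eqref{sk=r} reduces, after raising both sides to the $t$-th power (valid for $t>0$), to the case $t=1$, namely
$$|\kappa|!\,\prod_{j=1}^\ell(|\delta_j|!)^{|\beta_j|}\le|\alpha|!,$$
with $|\alpha|=\sum_j|\beta_j||\delta_j|$ and $|\kappa|=\sum_j|\beta_j|$. The plan is to recognize
$$\frac{|\alpha|!}{\prod_j(|\delta_j|!)^{|\beta_j|}}=\binom{|\alpha|}{\underbrace{|\delta_1|,\dots,|\delta_1|}_{|\beta_1|\text{ copies}},\dots,\underbrace{|\delta_\ell|,\dots,|\delta_\ell|}_{|\beta_\ell|\text{ copies}}}$$
as the multinomial coefficient associated with an ordered partition of an $|\alpha|$-set into $|\kappa|$ positive blocks, and then to prove the abstract statement: for any positive integers $b_1,\dots,b_n$ with $\sum_ib_i=N$, $\binom{N}{b_1,\dots,b_n}\ge n!$. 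I would argue by induction on $n$: the base $n=1$ is immediate, and for the step, factor as $\binom{N}{b_n}\binom{N-b_n}{b_1,\dots,b_{n-1}}$, relabel so that $b_n=\min_ib_i\le N/n$, and combine the elementary estimate $\binom{N}{b_n}=(N/b_n)\binom{N-1}{b_n-1}\ge N/b_n\ge n$ with the inductive hypothesis.

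For the sum estimate, the first step is the multi-to-scalar reduction
$$\frac{\kappa!}{\beta_1!\cdots\beta_\ell!}=\prod_{i=1}^p\binom{\kappa_i}{\beta_{1,i},\dots,\beta_{\ell,i}}\le\binom{|\kappa|}{|\beta_1|,\dots,|\beta_\ell|}=\frac{|\kappa|!}{|\beta_1|!\cdots|\beta_\ell|!},$$
which follows from viewing the LHS as counting color-respecting partitions of a disjoint union of $p$ labeled sets while the RHS counts the same after forgetting colors. The second step is to stratify $\mathcal{S}_\alpha$ by the scalar magnitudes $b_j=|\beta_j|$ and $d_j=|\delta_j|$: for fixed $(\ell,b_j,d_j)$ with $\sum_jb_jd_j=|\alpha|$, the number of $\beta_j\in\Z_+^p$ with $|\beta_j|=b_j$ is $\binom{b_j+p-1}{p-1}\le 2^{b_j+p-1}$, and analogously for $\delta_j$; since $\sum_jb_j,\sum_jd_j\le|\alpha|$, all these polynomial-in-$|\alpha|$ factors absorb into a single exponential $C(n,p)^{|\alpha|}$. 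What remains is a scalar combinatorial sum, which in the simplest case $n=p=1$ becomes $\sum_c\ell(c)!\,A^{|c|}$, where $c$ ranges over compositions of $|\alpha|$ and $\ell(c)$ is the number of distinct part sizes; since $\ell(c)(\ell(c)+1)/2\le|\alpha|$ gives $\ell(c)\le\sqrt{2|\alpha|}$, the factor $\ell(c)!$ is subexponential in $|\alpha|$, and the residual sum reduces to the geometric $\sum_cA^{|c|}=A(1+A)^{|\alpha|-1}$. Combining these ingredients yields the desired bound $LD^{|\alpha|}$.

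The main obstacle is the second inequality: naive term-by-term bounds such as $\kappa!/\prod\beta_j!\le\ell^{|\kappa|}$ are super-exponential in $|\alpha|$, so the proof must exploit the subexponential growth of $\ell!$ forced by the distinctness of the $\delta_j$'s, together with the constraint $\sum|\beta_j||\delta_j|=|\alpha|$, to collapse all the counting into a pure exponential. A conceptually cleaner alternative, in the spirit of the argument in~\cite{bierstonemilman}, is to deduce the bound directly from the Faà di Bruno formula (Theorem~\ref{denerlzdibruno}) applied to a pair of explicit analytic functions $(f,g)$ whose composition $f\circ g$ can be differentiated in closed form; matching the Faà di Bruno expansion with the closed-form derivatives then yields the estimate as an identity rather than through combinatorial enumeration.
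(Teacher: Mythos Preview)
The paper does not prove this lemma at all: it is simply quoted from \cite{bierstonemilman} with no argument given, so there is no in-paper proof to compare your proposal against.

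On its own merits: your proof of the first inequality is correct and self-contained. The reduction to $t=1$, the multinomial interpretation of $|\alpha|!/\prod_j(|\delta_j|!)^{|\beta_j|}$, and the inductive bound $\binom{N}{b_1,\dots,b_k}\ge k!$ (via $\binom{N}{b_k}=\frac{N}{b_k}\binom{N-1}{b_k-1}\ge N/b_k\ge k$ when $b_k=\min_ib_i$) all go through as written.

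For the second inequality your sketch has the right architecture---the reduction $\kappa!/\prod\beta_j!\le|\kappa|!/\prod|\beta_j|!$, the absorption of the counting factors $\binom{b_j+p-1}{p-1}$ and $\binom{d_j+n-1}{n-1}$ into an exponential in $|\alpha|$, and the identification of the residual scalar sum with a weighted count of compositions---but you only carry it through in the case $n=p=1$, and the general case still needs one layer of bookkeeping you have not written out. Also, the extra $\ell(c)!$ you insert presumes that $\mathcal S_\alpha$ ranges over \emph{ordered} tuples of distinct $\delta_j$; checking the formula in Theorem~\ref{denerlzdibruno} against $h'''$ in one variable shows the convention is unordered, so the $\ell(c)!$ is spurious and the scalar sum is already exactly $A(1+A)^{|\alpha|-1}$ with no subexponential correction needed. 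This does not break your argument (you are only overcounting), but it does mean the ``main obstacle'' you flag is not actually present. Your closing remark is apt: the route taken in \cite{bierstonemilman} is precisely the analytic one you describe---evaluate the Fa\`a di Bruno identity on a concrete pair $(f,g)$ whose composition has explicitly computable derivatives and read the estimate off as a consequence---which bypasses the combinatorics entirely.
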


\begin{Lem} \label{lem:usandoFdB} 
Let  $\Omega \subset \R^n \times \R^m$ be an open  set, $f\in G^{s}(\Omega)$ and $\tau>1$ is a parameter. Then, for each compact subset $K  \subset \Omega$   there exist constants $C,h>0$ 
such that and $\alpha \in \Z_+^m$, it holds
\begin{equation}\label{est:usandoFdB}
\sup_{(x, y)\in K}\left|\partial_x^{\alpha} \! \left\{ e^{\tau f(x,y)} \right\} \right|  
         \leq C h^{|\alpha|} |\alpha|!^s  \,e^{\tau \Re f(x,y)+ s \tau^{1/s}}.
\end{equation}
\end{Lem}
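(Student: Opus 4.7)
The plan is to apply the Faà di Bruno formula (Theorem~\ref{denerlzdibruno}) to the composition of $\varphi(z) = e^{z}$ with $g(x) = \tau f(x, y)$, differentiating only in $x$ while treating $y$ as a frozen parameter. Since $\varphi^{(k)} = \varphi$ for every $k$, the formula collapses to
\begin{equation*}
\partial_x^{\alpha} e^{\tau f(x,y)} = e^{\tau f(x,y)}\,\alpha!\sum_{\mathcal{S}_\alpha} \tau^{\kappa}\prod_{j=1}^{\ell}\frac{(\partial_x^{\delta_j} f(x,y))^{\beta_j}}{\beta_j!\,\delta_j!^{\beta_j}},
\end{equation*}
where $\kappa = \beta_1 + \cdots + \beta_\ell$ is a positive integer (the inner function is $\C$-valued, so $p = 1$ in the notation of Theorem~\ref{denerlzdibruno}, hence each $\beta_j$ is a positive integer). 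Taking absolute values and using $|e^{\tau f}| = e^{\tau \Re f}$ initiates the estimate.

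I would then insert the Gevrey control on $f$: since $f \in G^s(\Omega)$ and $K$ is compact, there exist $C_0, h_0 > 0$ with $|\partial_x^{\delta} f(x,y)| \leq C_0 h_0^{|\delta|} \delta!^s$ on $K$ for every $\delta$, and by enlarging $h_0$ we may further assume $C_0 \leq 1$. Using $\delta_j! \leq |\delta_j|!$ and inequality \eqref{sk=r} of Lemma~\ref{lem:fromBM} with exponent $t = s - 1$ (here the hypothesis $s > 1$ is used) gives $\prod_j |\delta_j|!^{(s-1)\beta_j} \leq |\alpha|!^{s-1}/\kappa!^{s-1}$. Combined with $\alpha! \leq |\alpha|!$ and $\delta_j! \geq 1$, this yields
\begin{equation*}
\left|\partial_x^{\alpha} e^{\tau f(x,y)}\right| \leq e^{\tau \Re f(x,y)}\, h_0^{|\alpha|} |\alpha|!^{s} \sum_{\mathcal{S}_\alpha} \frac{\tau^{\kappa}}{\kappa!^{s-1} \prod_j \beta_j!}.
\end{equation*}
Rewriting $\tau^{\kappa}/\kappa!^{s-1} = \kappa! \cdot (\tau^{\kappa}/\kappa!^{s})$ and using the elementary Gevrey estimate $\sup_{\kappa} \tau^{\kappa}/\kappa!^{s} \leq C_s\, e^{s\tau^{1/s}}$ (obtained by locating the maximum of the sequence near $\kappa \approx \tau^{1/s}$ via Stirling's formula), the sum is controlled by $C_s e^{s\tau^{1/s}} \sum_{\mathcal{S}_\alpha} \kappa!/\prod_j \beta_j!$. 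The second inequality of Lemma~\ref{lem:fromBM}, applied with $A = 1$, bounds the remaining combinatorial sum by $L D^{|\alpha|}$, giving the claimed estimate with $C = L C_s$ and $h = h_0 D$.

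The main technical point is to match the correct power of the factorial: Faà di Bruno contributes an $\alpha!$ prefactor, and only by using \eqref{sk=r} with the sharp exponent $t = s - 1$ (not the more obvious $t = s$) does the final bound come out as $|\alpha|!^{s}$ rather than the useless $|\alpha|!^{s+1}$. The secondary delicate point is producing the exponential factor with constant exactly $s$ in $e^{s\tau^{1/s}}$: this relies on the normalization $C_0 \leq 1$, without which the exponent would only be $s C_0^{1/s}\tau^{1/s}$. For the applications in Subsections~\ref{ap} and \ref{ap2} any positive constant in front of $\tau^{1/s}$ would in fact suffice, since the competing factor $e^{-\tau R^2/33}$ dominates as $\tau \to \infty$.
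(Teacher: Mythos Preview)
Your proof is correct and follows essentially the same route as the paper's: Fa\`a di Bruno, the Gevrey bound on $f$, inequality \eqref{sk=r} with exponent $t=s-1$, the rewriting $\tau^{\kappa}/\kappa!^{s-1}=\kappa!\,(\tau^{\kappa}/\kappa!^{s})$, and the combinatorial bound of Lemma~\ref{lem:fromBM}. The only differences are cosmetic: the paper does not normalize $C_0\le 1$ but instead absorbs it as the parameter $A=\tilde C$ in Lemma~\ref{lem:fromBM}, and it obtains $\tau^{\kappa}/\kappa!^{s}=\bigl((\tau^{1/s})^{\kappa}/\kappa!\bigr)^{s}\le e^{s\tau^{1/s}}$ directly from the exponential series rather than via Stirling.
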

\begin{proof}
It is enough to prove for $\theta\in (0,1]$. Using Theorem \ref{denerlzdibruno}, we have
\begin{align*}
  \del^{\alpha} \left\{ e^{\tau f(x,y)} \right\}= \sum_{\mathcal{S_{\alpha}}}   e^{\tau f(x,y)}\alpha! \frac{\tau \big(\del^{\delta_1} f(x,y)\big)^{\beta_1}}{\beta_1! \delta_1!^{\beta_1}} \cdots \frac{\tau \big(\del^{\delta_\ell} f(x,y)\big)^{\beta_\ell}}{\beta_\ell! \delta_\ell!^{\beta_\ell}}.
\end{align*}
Since  $f$ is in  $G^s(\Omega)$ 
there exist constants $\tilde{C},\tilde{h}>0$ such that,  
\begin{align*}
\left|\partial_x^{\alpha} \! \left\{ e^{\tau f(x,y)} \right\} \right|  &\leq \alpha! \,e^{\tau \Re f(x,y)} \! \sum_{\mathcal{S}_\alpha}
\frac{\tau^{\kappa}}{\beta_1! \dots \beta_\ell! } 
\prod_{j=1}^{\ell} \frac{ \big| \partial^{\delta_j}_x\left\{f(x,y)\right\}\big|^{\beta_j}}{\delta_j!^{\beta_j}}  \\[5pt]
  &\leq \alpha! \,e^{\tau \Re f(x,y)} \! \sum_{\mathcal{S}_\alpha}
\frac{\tau^{\kappa}}{\beta_1! \dots \beta_\ell! } 
\prod_{j=1}^{\ell} \frac{ \big( \tilde{C}\tilde{h}^{|\delta_j|} \delta_j!^s\big)^{\beta_j}}{\delta_j!^{\beta_j}} 
     \\[5pt]
  &= \tilde{h}^{|\alpha|} \alpha! \,e^{\tau \Re f(x,y)} \! \sum_{\mathcal{S}_\alpha}
\frac{(\tilde{C}\tau)^{\kappa}}{\beta_1! \dots \beta_\ell! } 
\prod_{j=1}^{\ell}   \delta_j!^{(s-1)|\beta_j|} 
     \\[5pt]
         &\leq  \tilde{h}^{|\alpha|} \alpha! \,e^{\tau \Re f(x,y)} \!  \sum_{\mathcal{S}_\alpha}
\frac{(\tilde{C} \tau)^{\kappa}}{\beta_1! \dots \beta_\ell! } \bigg(\frac{|\alpha|!}{ \kappa!}\bigg)^{s-1} \\
         &\leq  \tilde{h}^{|\alpha|} |\alpha|!^{s} \,e^{\tau \Re f(x,y)} \!  \sum_{\mathcal{S}_\alpha} 
\frac{\tilde{C}^{|\kappa|}\kappa!}{\beta_1! \dots \beta_\ell! } \bigg(\frac{\tau^{\kappa/s}}{ \kappa!}\bigg)^{s} \\
&\leq  \tilde{h}^{|\alpha|} |\alpha|!^{s} \,e^{\tau \Re f(x,y)+s \tau^{1/s}} \!  \sum_{\mathcal{S}_\alpha} 
\frac{\kappa!}{\beta_1! \dots \beta_\ell! }\tilde{C}^{|\kappa|}.
\end{align*}

Now we use Lemma~\ref{lem:fromBM} to find constants $C$ and $h$ such that
\begin{align*}
\left|\partial_x^{\alpha} \! \left\{ e^{\tau f(x,y)} \right\} \right| \leq Ch^{|\alpha|} |\alpha|!^{s} \,e^{\tau \Re f(x,y)+s \tau^{1/s}},
\end{align*}
as we wished to prove.
\end{proof}

\section{Trace of ultradistributions}\label{AppendixB}

Let $u\in {\mathcal{D}}_s'(W)$ be an ultradistribution such that
\begin{align}\label{WFStraco}
WF_{s}(u) \cap \big\{(x,t, 0, \theta), (x,t) \in W, \theta \neq 0\big\}= \emptyset.  
\end{align}

We will see that condition \eqref{WFStraco} is enough to define the trace of $u$ at $t$, $\iota_t^{\ast}u\in {\mathcal{D}}_s'(B_R^{\R^{m}}(0))$,  as
\begin{align}\label{TRACE}
  (\iota_t^{\ast}u)( \varphi )&= \frac{1}{(2 \pi)^{N}} \int \mathcal{F}(\lambda u)(\sigma,\theta) e^{it \theta} \bigg(\int \varphi(x) e^{i x \sigma} \dd x\bigg) \dd \sigma \dd \theta, \quad \forall \varphi \in G^{s}_c(B_{R}^{\R^m}(0)),
\end{align}
where $\lambda\in G^{s}_c(W)$ is equal to $1$ in $V$ and $\mathcal{F}(\lambda u)$ stands for the Fourier transform of $\lambda u$. Note that, when $u\in\mathcal{D}'(W)$, this is the classical definition of trace of a distribution, see  \cite[Section 8]{Hor90a}.

\begin{Pro}\label{ProB1}
Let $u\in {\mathcal{D}}_s'(W)$ be an ultradistribution such that 
\eqref{WFStraco} is valid then $\iota_{t}^{\ast} u$ given by \eqref{TRACE} is in ${\mathcal{D}}_s'(B_{R}^{\R^{m}}(0))$. Moreover, for each fixed $\varphi \in G^{s}_c(B_R^{\R^{m}}(0))$ the function $B^{\Rn}_R(0) \ni t \mapsto \iota_{t}^{\ast}u(\varphi)$ is in $G^{s}(B^{\Rn}_R(0))$. 
\end{Pro}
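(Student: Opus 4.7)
The plan is to exploit the wavefront set hypothesis~\eqref{WFStraco} to split the $(\sigma,\theta)$-integration in~\eqref{TRACE} into two conic regions and estimate each separately. Three ingredients will be needed. First, the Paley--Wiener theorem for ultradistributions of class $s$, applied to $\lambda u \in \mathcal{E}'_s(\R^N)$, provides the growth bound
\[
|\mathcal{F}(\lambda u)(\sigma,\theta)| \leq C_h\, e^{h(|\sigma|+|\theta|)^{1/s}}, \quad (\sigma,\theta)\in \R^m\times\R^n,
\]
for every $h>0$. Second, for any $\varphi \in G^{s,h}_c(\overline{B^{\R^m}_R(0)})$, the auxiliary Fourier transform $\tilde\varphi(\sigma) := \int \varphi(x)\,e^{ix\sigma}\,\dd x$ satisfies a Gevrey-type decay $|\tilde\varphi(\sigma)| \leq C'\|\varphi\|_{h,\overline{B^{\R^m}_R(0)}}\,e^{-c'|\sigma|^{1/s}}$ for some $c'>0$ depending on $h$. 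Third, hypothesis~\eqref{WFStraco}, together with the compactness of $\supp\lambda \subset W$ and of the unit sphere in the $\theta$-direction, yields, via the characterization of $WF_s$ in terms of conic Gevrey decay of Fourier transforms of localizations, constants $\delta,c,C>0$ such that
\[
|\mathcal{F}(\lambda u)(\sigma,\theta)| \leq C\, e^{-c(|\sigma|+|\theta|)^{1/s}}
\]
on the open cone $\Gamma := \{(\sigma,\theta) : |\sigma| < \delta|\theta|\}$.

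With these ingredients the argument for $\iota_t^{\ast} u \in \mathcal{D}'_s(B_R^{\R^m}(0))$ is a straightforward splitting. On $\Gamma$, the exponential decay of $\mathcal{F}(\lambda u)$ alone makes the integral absolutely convergent, uniformly in $t$, and continuous in $\varphi$. On $\Gamma^c$, one has $|\theta|\leq \delta^{-1}|\sigma|$, and choosing $h$ in the Paley--Wiener estimate strictly smaller than $c'/2$ produces a net exponential decay in $|\sigma|$ via the product $|\mathcal{F}(\lambda u)(\sigma,\theta)|\cdot|\tilde\varphi(\sigma)|$. Summing the two contributions yields $|\iota_t^{\ast} u(\varphi)| \leq \widetilde{C}_h \|\varphi\|_{h,\overline{B^{\R^m}_R(0)}}$, which is exactly the continuity requirement for $\iota_t^{\ast} u$ to define an element of $\mathcal{D}'_s(B_R^{\R^m}(0))$.

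For Gevrey regularity in $t$, I would differentiate~\eqref{TRACE} under the integral sign, producing a factor $(i\theta)^\beta$ in the integrand. The classical estimate $\sup_{r\geq 0} r^{|\beta|} e^{-\kappa r^{1/s}} \leq C_1 H^{|\beta|}|\beta|!^s$, obtained via Stirling, absorbs $|\theta|^{|\beta|}$ into the exponentials from the previous step: directly on $\Gamma$ against half of the $(|\sigma|+|\theta|)^{1/s}$-exponent, and on $\Gamma^c$ via $|\theta|\leq \delta^{-1}|\sigma|$ paired with half of the $|\sigma|^{1/s}$-exponent from $\tilde\varphi$. The surviving exponentials still secure absolute convergence of the differentiated integral (which justifies differentiation under the integral), and one obtains a uniform bound $|\partial_t^\beta\iota_t^{\ast} u(\varphi)|\leq C H^{|\beta|}|\beta|!^s$, proving that $t\mapsto \iota_t^{\ast} u(\varphi) \in G^s(B_R^{\R^n}(0))$.

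The main obstacle is the third ingredient: extracting from the pointwise microlocal hypothesis~\eqref{WFStraco} a \emph{uniform} conic Gevrey-decay estimate on $\mathcal{F}(\lambda u)$ covering the full cone $\Gamma$ around the $\theta$-axis. This requires a compactness/covering argument combined with the Gevrey analogue of H\"ormander's equivalence between microlocal $s$-regularity at a point and conic decay of Fourier transforms of cutoff localizations. The delicate aspect is gluing the local estimates while preserving uniform constants $c$ and $\delta$, so that $\Gamma$ and $\Gamma^c$ actually cover $\R^N\setminus\{0\}$ and the exponents $h$, $c$, $c'$ can be balanced against each other in the splitting argument.
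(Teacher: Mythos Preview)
Your proposal is correct and follows essentially the same approach as the paper: both split the $(\sigma,\theta)$-integration into the cone $\{|\sigma|\le \rho|\theta|\}$ and its complement, use the wavefront-set hypothesis to get exponential decay of $\mathcal F(\lambda u)$ on the first piece, the Paley--Wiener growth of $\mathcal F(\lambda u)$ combined with the Gevrey decay of $\mathcal F\varphi$ on the second, and then differentiate under the integral and absorb $|\theta|^{|\beta|}$ via $\sup_{r\ge 0} r^{|\beta|}e^{-\kappa r^{1/s}}\le C H^{|\beta|}|\beta|!^s$. The concern you flag about extracting a \emph{uniform} conic decay estimate from~\eqref{WFStraco} is exactly the point the paper glosses over in one line (asserting the existence of $\rho,h,C$ without further argument), and your proposed compactness/covering justification is the standard way to fill it in.
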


\begin{proof}
Since $WF_{s}(u)$ and $\{(x,t, 0, \theta), (x,t) \in \supp \lambda, \theta \neq 0\}$ are disjoint closed cones, there is $\rho>0$ such that
\begin{align*}
WF_{s}(u) \cap \big\{(x,t, \sigma, \theta), (x,t) \in \supp \lambda,\theta \neq 0, \textrm{ and } |\sigma|\leq \rho |\theta|\big\}= \emptyset.
\end{align*}
Let $A_{1}= \{ (\sigma, \theta): |\sigma|\leq \rho|\theta|\}$ and $A_{2}= \RN\setminus A_{1}.$
Thus, 
\begin{equation}\label{traco11}
\del_{t}^{\beta}(\iota_{t}^{\ast} u)(\varphi)= I_1+ I_2
\end{equation}
where
\begin{align}\label{traco12}
  I_k= \frac{1}{(2\pi)^{N}}\int_{A_{k}} \mathcal{F}(\lambda u)(\sigma,\theta) (i\theta)^{\beta} e^{it \theta} \bigg(\int \varphi(x) e^{i x \sigma} \dd x\bigg) \dd \sigma \dd \theta, \quad k=1,2.
\end{align}
 Then,  there are $h>0$ and $C>0$ such that
\begin{align}\label{traco21}
|\mathcal{F}(\lambda u)(\sigma, \theta)|\leq C e^{-h |(\sigma, \theta)|^{1/s}}, 
\end{align}
for every $ (\sigma, \theta) \in A_{1}$ and for every $\epsilon>0$ there exist a positive constant $C_\epsilon$ such that
\begin{align}\label{estA2}
  |\mathcal{F}(\lambda u) (\sigma, \theta)|\leq C_\epsilon e^{\epsilon |(\sigma, \theta)|^{1/s}},  
\end{align}
for every  $(\sigma, \theta) \in A_{2}.$
Moving on, assume that $\varphi \in G^{s,r}_c(B_R^{\R^{m}}(0))$, therefore there is a constant $\tilde{C}$ depending only on $m$ and $R$ and $a$ depending only on $\rho,r$ and $s$ (see inequality~\eqref{DesigualdadePraFourier} below) such that
\begin{align}\label{Fourierestimate}
  |\mathcal{F}\varphi(\sigma)|&\leq \tilde{C} \|\varphi\|_{r, B_{R}^{\R^{m}}(0)}e^{-\frac{s}{r^{1/s}}|\sigma|^{1/s}}\nonumber \\
    &\leq \tilde{C} \|\varphi\|_{r, B_{R}^{\R^{m}}(0)}e^{-a |(\sigma,\theta)|^{1/s}}, 
\end{align}
for every  $(\sigma, \theta) \in A_{2}$.
In one hand, if we choose $\tilde{h}= h^{-s}(2s)^s $ it follows that 
\begin{align}\label{termI1}
  |I_1| &\leq \|\varphi\|_{r, B_{R}^{\R^{m}}(0)} \frac{C \tilde{C}}{(2\pi)^{N}} \int_{A_{1}} |\theta|^{|\beta|}   e^{-h |(\sigma, \theta)|^{1/s}}  \dd \sigma \dd \theta\nonumber\\
&\leq  \|\varphi\|_{r, B_{R}^{\R^{m}}(0)} \tilde{h}^{|\beta|} |\beta|!^s \frac{C \tilde{C}}{(2\pi)^{N}} \int_{A_{1}} \frac{|(\sigma,\theta)|^{|\beta|}}{\tilde{h}^{|\beta|}|\beta|!^{s}}   e^{-h |(\sigma, \theta)|^{1/s}}  \dd \sigma \dd \theta\nonumber\\&\leq  \|\varphi\|_{r, B_{R}^{\R^{m}}(0)} \tilde{h}^{|\beta|} |\beta|!^s \frac{C \tilde{C}}{(2\pi)^{N}} \int_{A_{1}}   e^{- \frac{h}{2} |(\sigma, \theta)|^{1/s}}  \dd \sigma \dd \theta.
\end{align}
On the other hand, we can choose $\epsilon<a/2$  and $\tilde{a}= a^{-s}(4s)^s$ to obtain that
\begin{align}\label{termI2}
  |I_2|&\leq \|\varphi\|_{r, B_{R}^{\R^{m}}(0)}\frac{C_\epsilon \tilde{C} }{(2\pi)^{N}} \int_{A_2} |\theta|^{|\beta|}  e^{-\frac{a}{2} |(\sigma, \theta)|^{1/s}} \dd \sigma \dd \theta\nonumber\\
&\leq \|\varphi\|_{r, B_{R}^{\R^{m}}(0)} \tilde{a}^{|\beta|}|\beta|!^s\frac{C_\epsilon \tilde{C} }{(2\pi)^{N}} \int_{A_2}   e^{-\frac{a}{4} |(\sigma, \theta)|^{1/s}} \dd \sigma \dd \theta\nonumber.
\end{align}

Taking $b= \max\{ \tilde{a}, \tilde{h}\}$ we conclude that there is $C'>0$ such that
then it holds 
\begin{equation}\label{the-end}
|\del_t^{\beta}(\iota_t^{\ast}u)(\varphi)|\leq C'\|\varphi\|_{r, B_{R}^{\R^{m}}(0)} b^{|\beta|}|\beta|!^s  .
\end{equation}
This means that, if $t\in B_{R}^{\R^{n}}(0)$ is fixed, then  $\del_t^{\beta}(\iota_t^{\ast}u)$ is a continuous linear functional in $ G^{s}_c(B_{R}^{\R^{m}}(0))$, i.e., $\del_t^{\beta}(\iota_t^{\ast}u) \in {\mathcal{D}}_s'(B_R^{\R^m}(0))$. Moreover if $\varphi$ is fixed, then \eqref{the-end} shows that $(\iota_t^{\ast} u)(\varphi) \in G^{s}(B_{R}^{\R^n}(0))$.
\end{proof}

\begin{Pro}\label{Leibiniz}
Let $u\in {\mathcal{D}}_s'(W)$ be an ultradistribution such that 
\eqref{WFStraco} is valid and $\psi\in G^s(W)$ then 
 the function $B^{\Rn}_R(0) \ni t \mapsto \iota_{t}^{\ast}u(\varphi(\cdot,t))$ is in $G^{s}(B^{\Rn}_R(0))$ and the following Leibniz formula holds
 \begin{equation}
 \del_t^{\beta}\big\{(\iota_t^{\ast}u)_x(\varphi(x,t))\big\} = \sum_{\alpha\le\beta} {\beta\choose\alpha}  \big(\del_t^{\alpha}(\iota_t^{\ast}u)\big)_x(\del_t^{\beta-\alpha}\varphi(x,t)).
 \end{equation}
\end{Pro}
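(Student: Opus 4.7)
The plan is to work directly with the Fourier integral representation \eqref{TRACE} of the trace. Writing $\widehat{\varphi(\cdot, t)}(\sigma) = \int \varphi(x,t) e^{ix\sigma}\dd x$ for the partial Fourier transform in $x$, we have
$$(\iota_t^{\ast} u)_x\big(\varphi(x,t)\big) = \frac{1}{(2\pi)^N}\int \mathcal{F}(\lambda u)(\sigma, \theta)\, e^{it\theta}\, \widehat{\varphi(\cdot, t)}(\sigma)\, \dd\sigma\, \dd\theta,$$
with $\lambda \in G^s_c(W)$ identically $1$ on $V$. Since $t$-differentiation commutes with the Fourier transform in $x$, i.e.\ $\del_t^{\gamma}\widehat{\varphi(\cdot, t)}(\sigma) = \widehat{\del_t^{\gamma}\varphi(\cdot, t)}(\sigma)$, I would first differentiate formally under the integral sign and apply the scalar Leibniz rule to the product $e^{it\theta}\widehat{\varphi(\cdot, t)}(\sigma)$:
$$\del_t^{\beta}\big[e^{it\theta}\, \widehat{\varphi(\cdot, t)}(\sigma)\big] = \sum_{\alpha\le\beta}\binom{\beta}{\alpha}(i\theta)^{\alpha}\, e^{it\theta}\, \widehat{\del_t^{\beta-\alpha}\varphi(\cdot, t)}(\sigma).$$
Inserting this identity inside the integral and comparing with \eqref{traco11}--\eqref{traco12}, the $\alpha$-th term is precisely the integral representation of $\big(\del_t^{\alpha}\iota_t^{\ast} u\big)_x\big(\del_t^{\beta-\alpha}\varphi(x,t)\big)$, which yields the desired Leibniz formula.

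The justification of the interchange of $\del_t^{\beta}$ with the integral, together with the $G^s$ regularity of $t\mapsto(\iota_t^{\ast} u)_x(\varphi(x,t))$, uses the same dichotomy as in Proposition~\ref{ProB1}. I would split $\R^N = A_1 \cup A_2$ with $A_1 = \{|\sigma| \le \rho|\theta|\}$ and $A_2 = \R^N\setminus A_1$, choosing $\rho$ so that $A_1$ is disjoint from the $s$-wavefront directions of $\lambda u$ over the support of $\lambda$. On $A_1$ one has the exponential decay $|\mathcal{F}(\lambda u)(\sigma, \theta)| \le C e^{-h|(\sigma, \theta)|^{1/s}}$, which absorbs the polynomial factor $|\theta|^{|\alpha|}$ against a Gevrey bound $\tilde h^{|\alpha|}|\alpha|!^{s}$, exactly as in \eqref{termI1}. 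On $A_2$ I would pair the growth bound $|\mathcal{F}(\lambda u)(\sigma, \theta)| \le C_\epsilon e^{\epsilon |(\sigma, \theta)|^{1/s}}$ with the Gevrey decay of $\widehat{\del_t^{\beta-\alpha}\varphi(\cdot, t)}(\sigma)$ in $\sigma$, choosing $\epsilon$ small enough that the combined exponent is strictly negative, paralleling \eqref{Fourierestimate}--\eqref{termI2}.

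The main technical step is to produce a Gevrey-type bound of the form
$$\big|\widehat{\del_t^{\gamma}\varphi(\cdot, t)}(\sigma)\big| \le \tilde C\, r^{|\gamma|} |\gamma|!^{s}\, e^{-a|\sigma|^{1/s}}$$
uniformly for $t$ ranging over a compact set, with $\tilde C, r, a>0$ depending only on a Gevrey seminorm of $\varphi$ on a fixed compact neighborhood. This follows by integrating by parts in the defining integral for $\widehat{\del_t^{\gamma}\varphi(\cdot, t)}(\sigma)$ and optimizing the number of integrations in $\sigma$; the joint Gevrey hypothesis on $\varphi$ is what lets one extract both the factor $|\gamma|!^s$ in $t$ and the Gevrey decay in $\sigma$. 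This is the step I expect to require the most care, since the naive estimate on $\widehat{\del_t^{\gamma}\varphi(\cdot,t)}$ would allow the constants to depend on $\gamma$ in a non-Gevrey fashion. Once this bound is in place, summing the contributions over $\alpha \le \beta$ with $\binom{\beta}{\alpha}$ and invoking the standard inequality $\binom{\beta}{\alpha}|\alpha|!^{s}|\beta-\alpha|!^{s}\le 2^{s|\beta|}|\beta|!^{s}$ delivers a bound of the form
$$\big|\del_t^{\beta}\big\{(\iota_t^{\ast}u)_x(\varphi(x,t))\big\}\big|\le C' b^{|\beta|}|\beta|!^{s},$$
simultaneously establishing the Gevrey regularity of $t \mapsto (\iota_t^{\ast}u)_x(\varphi(x,t))$ and legitimizing the termwise differentiation under the integral that produced the Leibniz formula.
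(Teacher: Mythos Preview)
Your proposal is correct and follows precisely the approach the paper intends: the paper's own proof consists of the single sentence ``It follows from the same arguments as in the proof of Proposition~\ref{ProB1},'' and what you have written is exactly a careful unpacking of those arguments in the case where the test function depends on $t$. Your identification of the one genuinely new ingredient---the joint Gevrey bound $\big|\widehat{\del_t^{\gamma}\varphi(\cdot, t)}(\sigma)\big| \le \tilde C\, r^{|\gamma|} |\gamma|!^{s}\, e^{-a|\sigma|^{1/s}}$ with constants independent of $\gamma$---is on point, and your sketch of how to obtain it from the joint $G^s$ regularity of $\varphi$ via integration by parts is the right mechanism.
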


\begin{proof}
It follows from the same arguments as in the proof of Proposition~\ref{ProB1}.
\end{proof}

Note that if $\varphi\in G^{s,r}_c(B_R^{\R^{m}}(0)),$ then
  \begin{align*}
    |\xi^{\alpha} \mathcal{F}\varphi(\xi)|&\leq \big| \int \big(D_{x}^{\alpha}\varphi(x)\big) e^{i x\xi} \dd x\big|\\
    & \leq \mu(B_{R}^{\R^{m}}(0)) \|\varphi\|_{r, B_{R}^{\R^{m}}(0)} r^{|\alpha|} |\alpha|!^s,
  \end{align*}
  where $\mu$ stands for the Lebesgue measure in $\R^{m}$. Moreover, since there exist a positive constant $C_m$ depending only of the dimension $m$ such that
  \begin{align*}
    |\mathcal{F}\varphi(\xi)| & \leq C_m \mu(B_{R}^{\R^{m}}(0)) \|\varphi\|_{r, B_{R}^{\R^{m}}(0)}\frac{ r^{|\alpha|} |\alpha|!^{s}}{ |\xi|^{|\alpha|}},\quad \xi\ne0
  \end{align*}
  holds for every $\alpha \in \Z_+^{m}$, we obtain
  \begin{align}\label{DesigualdadePraFourier}
    |\mathcal{F}\varphi(\xi)| & \leq C_m\mu(B_{R}^{\R^{m}}(0)) \|\varphi\|_{r, B_{R}^{\R^{m}}(0)} e^{- \frac{s}{r^{1/s}}|\xi|^{1/s}}.
  \end{align}


\subsection{Final Remark: definition of the restriction}

Observe that $\iota_t^{\ast} u$ in $V$ is independent of the choice of $\lambda$ in the following way: for any $\varphi\in G^s_c(B_{R}^{\Rm}(0))$ and any $t \in B_{R}^{\R^n}(0)$,  function $\iota_t^{\ast} u(\varphi)$  does not dependent of the choice of $\lambda$ in $G^{s}_c(W)$ as long as $\lambda=1$ in $V$. To see use that if $\psi \in G^{s}_c(B_{R}^{\R^{n}}(0))$, then
\begin{align*}
  u(\psi\otimes \varphi)&= (\lambda u) (\psi\otimes \varphi)\\
                        &=\frac{1}{(2\pi)^{N}}\int \mathcal{F}(\lambda u)(\sigma,\theta) \bigg(\int  e^{it \theta}\Psi(t) \dd t\bigg)  \bigg(\int \varphi(x) e^{i x \sigma} \dd x\bigg) \dd \sigma \dd \theta\\
  &= \int (\iota^{\ast}_t u)(\varphi) \psi(t)\dd t.
\end{align*}
The equality above implies that the function $t\mapsto \iota_t^{\ast}u(\varphi)$ is uniquely determined in $B_R^{\Rn}(0)$.

\bibliographystyle{alpha}
\bibliography{Bibliografia}

\begin{thebibliography}{BCH08}

\bibitem[BCH08]{bch_iis}
S.~Berhanu, P.~D. Cordaro, and J.~Hounie.
\newblock {\em An Introduction to Involutive Structures}, volume~6 of {\em New
  Mathematical Monographs}.
\newblock Cambridge University Press, 2008.

\bibitem[BM04]{bierstonemilman}
E.~Bierstone and P.~Milman.
\newblock Resolution of singularities in {D}enjoy-{C}arleman classes.
\newblock {\em Selecta Math}, 10 no. 1:1--28, 2004.

\bibitem[BT81]{BT81}
M.~S. Baouendi and F.~Treves.
\newblock A property of the functions and distributions annihilated by a
  locally integrable system of complex vector fields.
\newblock {\em Annals of Mathematics}, 113(2):387--421, 1981.

\bibitem[Cae01]{c00}
P.A.~S. Caetano.
\newblock {\em Classes de Gevrey em Estruturas Hipo-Anal{\'\i}ticas.
  \emph{Ph.D. Thesis}}.
\newblock Universidade de S{\~a}o Paulo, 2001.

\bibitem[EG03]{EaGr03}
M.G. Eastwood and R.C. Graham.
\newblock Edge of the wedge theory in hypo-analytic manifolds.
\newblock {\em Comm. Partial Differential Equations}, 28(11-12):2003--2028,
  2003.

\bibitem[HM98]{HM98}
J.~Hounie and P.~Malagutti.
\newblock On the convergence of the {B}aouendi-{T}reves approximation formula.
\newblock {\em Comm. Partial Differential Equations}, 23(7-8):1305--1347, 1998.

\bibitem[H{\"o}r90]{Hor90a}
L.\ H{\"o}rmander.
\newblock {\em The analysis of linear partial differential operators. {I}}.
\newblock Springer Study Edition. Springer-Verlag, Berlin, {S}econd edition,
  1990.

\bibitem[Kom73]{Ko}
H.~Komatsu.
\newblock Ultradistributions {I}: Structure theorems and a characterization.
\newblock {\em J. Fac. Sci. Univ. Tokyo}, 20:25--105, 1973.

\bibitem[Rag19]{lfr2019}
L.~F. Ragognette.
\newblock Ultradifferential operators in the study of {G}evrey solvability and
  regularity.
\newblock {\em Math. Nachr.}, 292(2):409--427, 2019.

\bibitem[Tre92]{treves_has}
F.~Treves.
\newblock {\em Hypo-analytic Structures: Local Theory}, volume~40 of {\em
  Princeton Mathematical Series}.
\newblock Princeton University Press, 1992.

\end{thebibliography}

\end{document}